\pgfplotsset{every axis/.append style={
                    axis x line=middle,    % put the x axis in the middle
                    axis y line=middle,    % put the y axis in the middle
                    axis line style={-,color=blue}, % arrows on the axis
                    xlabel={$x$},          % default put x on x-axis
                    ylabel={$y$},          % default put y on y-axis
            }}
\def\rel         {{rel}}
\def\ZZ         {{\mathbb Z}}
\def\RR         {{\mathbb R}}
\def\CC         {{\mathbb C}}
\def\PP         {{\mathbb P}}
\def\ZZ         {{\mathbb Z}}
\def\A         {{\cal A}}
\def\C         {{\cal C}}
\def\D           {{\cal D}}
\def\F          {{\cal F}}
\def\J          {{\cal J}}
\def\R          {{\cal R}}
\DeclareMathOperator{\Eff}{Eff}
\DeclareMathOperator{\NS}{NS}
\DeclareMathOperator{\Char}{Char}
\DeclareMathOperator{\Sing}{Sing}
\DeclareMathOperator{\Card}{Card}
\def\cN{\mathcal{N}}
\def\cC{\mathcal{C}}
\def\cP{\mathcal{P}}
\def\cD{\mathcal{D}}
\def\cL{\mathcal{L}}
\def\cQ{\mathcal{Q}}
\def\cal        {\mathcal}
\newtheorem{thm}{Theorem}[section]
\newtheorem{theorem}{Theorem}[section]
\newtheorem{lemma}[theorem]{Lemma}
\newtheorem{prop}[theorem]{Proposition}
\newtheorem{corollary}[theorem]{Corollary}
\newtheorem{cor}[theorem]{Corollary}
\theoremstyle{definition}
\newtheorem{rem}[theorem]{Remark}
\newtheorem{dfn}[theorem]{Definition}
\newtheorem{exam}[theorem]{Example}
\theoremstyle{remark}
\title[Free quotients of fundamental groups...]{Free quotients of fundamental groups of smooth quasi-projective varieties} 
\author{Jose I. Cogolludo and Anatoly Libgober}
\address{Departamento de Matem\'aticas, IUMA\\
Universidad de Zaragoza\\
C.~Pedro Cerbuna 12\\
50009 Zaragoza, Spain}
\email{jicogo@unizar.es}
\address{Department of Mathematics\\
University of Illinois\\
Chicago, IL 60607}
\email{libgober@uic.edu}
\thanks{The first named author is partially supported by PID2020-114750GB-C31 and
Grupo ``\'Algebra y Geometr{\'i}a'' of Gobierno de Arag\'on/Fondo
Social Europeo.
The second named author was partially supported by a grant from Simons Foundation}
\begin{document}
\begin{abstract} We study the fundamental groups of the complements to 
curves on simply connected surfaces, admitting non-abelian free groups as their quotients.  
We show that given a subset of the
N\'eron-Severi group of such a surface, there are only finitely many classes of equisingular isotopy
of curves with irreducible components belonging to this 
subset for which the fundamental groups of the
complement admit surjections onto a free group of a given sufficiently large rank. 
Examples of subsets of the N\'eron-Severi group 
are given with infinitely many isotopy classes of curves with 
irreducible components from such a subset and fundamental groups of the
complements admitting surjections on a free group only of a small rank. 

\end{abstract}
\maketitle
\section{Preface}

This note describes a result on the
fundamental groups of the complements to curves 
$\D\subset V$ on a smooth simply connected projective surface $V$ such that $\pi_1(V
\setminus \D)$ admits a 
surjection on a free  non-abelian group when the rank of free groups
varies.  To this end, we consider curves $\D$, with reduced components
\footnote{The topological space underlying the complement
  $V\setminus \D$ depends only on the support of $\D$ i.e. for a
  classification of the classes of fundamental groups of smooth quasi-projective varieties, without loss of
  generality one can assume that $\D$ is reduced. However,  geometric
  detection of the fundamental groups, e.g. via study of the pencils
  (cf. proof of the Theorem~\ref{main} in Section~\ref{proofmaintheorem}), sometimes requires the consideration
  of non-reduced curves with the same support as $\D$. Below, it should be
  clear from the context when the consideration of non-reduced curves is necessary.}, 
 as elements of {\it
  classes} $\cal C(\Delta)$, parametrized by subsets $\Delta$ of 
the effective cone of $V$ in the sense that the classes of irreducible components of $\D$
are required to be in $\Delta$.  An example of
such a class is given by arrangements of lines in
a projective plane: here
$\Delta$ is the positive generator of $\NS(\PP^2$) and an
arrangement of $n$ lines is viewed as a curve $\D$ of degree
$n$. Grouping the curves by restricting the classes of irreducible components 
has a certain resemblance to the study of the distribution of the number
fields with a fixed discriminant (cf. for example \cite{EV} or \cite{Ma}). 

In these circumstances, we show that for a fixed saturated (see
Definition \ref{saturateddef})
set $\Delta$, there is a constant
$M_2(\Delta)$ such that the existence of a surjection $\pi_1(V\setminus \D)
\rightarrow F_r$ with $r>M_2(\Delta)$ implies that 
all components of $\D$ belong to a pencil of curves with the 
class of generic member being one of the classes in
$\Delta$. In other words, the existence of a surjection of the fundamental
group of a quasi-projective surface with compactification $V$ onto $F_r$
with a sufficiently large $r$
%, such that the components of the divisor at infinity
%$\D \subset V$ belong to 
%$\Delta$,
 implies that the divisor at infinity $\D$ has a very special
geometry (i.e., is a union of reducible curves which are members of a
pencil or in other words $\D$ is composed of curves of a pencil).
%\emph{of divisors in a class in $\Delta$}). 
This leads to the finiteness results
for such classes of curves described below. 
We make a technical assumption
on the singularities of $\D$ at intersections of different components, but
not on the singularities of irreducible components outside of these intersections.
This paper studies surjections $\pi_1(V\setminus \D) \rightarrow F_r$
which are essential in the sense that the images in $F_r$ of the elements 
of $\pi_1(V\setminus \D)$ associated with each irreducible component 
of $\D$ (the meridians) are conjugates to the standard generators of $F_r$
(cf. condition (**) below for exact statement).
We expect that a more careful analysis can
show that the technical assumptions we made can be eliminated or
substantially weakened\footnote{ maximal rank of a free quotient of a
  group sometimes is called {\it corank}. Here we consider essential 
  surjections of groups which may have a
  corank greater than $r$.}

Note that for arbitrary $r>1$ and
any pencil $L$ on $V$, if one can considers a curve
$\D=\bigcup_{i=1}^{r+1} D_i$ which is a union 
of $r+1$ members $D_i$ of $L$, then the rational dominant map onto $\PP^1$ corresponding to $L$
induces the surjection $\pi_1(V\setminus \D) \rightarrow
F_r$. Our result shows that given a set 
$\Delta$ of allowed classes in $\NS(V)$ of reduced components of $\D$, this is the only way to  
obtain curves admitting surjections of their fundamental groups of the
complements onto $F_r$ satisfying (**) with
$r>M_2(\Delta)$. Hence, the study of fundamental groups
with large free quotients depends on the study of the distribution of
constants $M_2(\Delta)$. At this point we have explicit calculations of 
the constants $M_2(\Delta)$ for $V=\PP^2$
and several other surfaces, provided in section \ref{examples}.

The main step in our argument is a statement about the pencils of curves
on $V$ admitting sufficiently many fibers having as irreducible components
only the curves with classes in N\'eron-Severi group in a chosen subset $\Delta$ of the effective cone. More precisely, 
we consider pencils $L \subset \PP(H^0(V,{\cal O}_V(D)))$ (i.e. $\dim L=1$) 
admitting $r+1$ distinct divisors $D_1,...,D_{r+1}, D_i\in L$
whose classes of irreducible components of $D_i$ form a subset of
$\Delta$ having cardinality $c \ge 1$.
We show that the existence of such a pencil with $r > M_2(\Delta)$, implies
that all elements of the pencil $L$ are already in a
fixed class from $\Delta$. The claim about the free quotients of the
fundamental groups follows immediately using \cite{arapura} (cf. also \cite{ACM}).
 
The above results admit a refinement. We also show that if $\pi_1(V
\setminus \D)$ admits 
a surjection onto $F_r$  then already for  $r \ge 10$, such a curve $\D$ (having irreducible
components in $\Delta$) must be a union of curves belonging to a pencil, with
the generic member having N\'eron-Severi class in $\Delta$,  
{\it with only finitely many exceptions depending on $\Delta$}.  
The number of exceptions in general grows together
with the set $\Delta$. For some $V$ and $\Delta \in \NS(V)$, the
absence of surjections onto $F_r$ with only finitely
many exceptions occurs already for some $r  \le 9$ (see section
\ref{surfaces}).  However, for general $V$, $r  \le 9$ and certain
$\Delta$ one expects infinitely many curves $\D$ with components in $\Delta$ admitting
surjections onto $F_r$ and not being a union of members of a pencil
with the class of a generic member in
$\Delta$. It is an interesting problem to find the
threshold $M_1(V,\Delta) \le \min\{9,M_2(V,\Delta)\}$ such that for $r \le
  M_1(V,\Delta)$ there are infinitely many fundamental
groups of the complement to curves having $F_r$ as a quotient and  irreducible components in
$\Delta$, explicitly for a specific $(V,\Delta)$ with initial steps made in
section \ref{examples}. 
%though for some pairs $V,\Delta$ the upper
%bound $10$ can be improved. 

A precursor of such results is the following statement about arrangements of
lines in $\PP^2$ which was first shown in \cite{LY} and later
improved in \cite{YF}, \cite{YP}. If an arrangement $\A$ of lines in
$\PP^2$, i.e an algebraic curve with all its components being in $\C([1])$ where
$[1]$ is the positive generator of ${\rm NS}(\PP^2)$, is such that
%there is an essential surjection $\pi_1(\PP^3\setminus \A) \rightarrow
%F_r, r \ge 4$ then 
there exist a pencil of curves of degree $d \ge 1$ admitting $5$
or more members which are unions of lines and such that  $\A$ is the
arrangement of lines belonging to these members of the pencil,  then $d=1$ and therefore 
$\A$ is a central arrangement, i.e., is the union of lines all
containing a fixed point (cf. section \ref{examples} for a discussion of
this and other special cases). This implies that $\pi_1(\PP^2\setminus
\A)$ has no essential surjections onto $F_r, r \ge 4$ except for the
central arrangements. On the other hand there are infinitely many
non-central arrangements of lines with $\pi_1(\PP^2\setminus \A)$ having essential
surjections onto $F_2$. In the language introduced in this paper
$M_2(\PP^2,[1])=3$ and where $\Delta=[1]$ and $[d] \in \NS(\PP^2)$ is
the class of a curve of degree $d$.

The sets $\Delta$, used below, must satisfy certain consistency conditions. 

\begin{dfn}\label{saturateddef} Let $\Eff(V)
\subset \NS(V)$ denote the effective cone of $V$ (cf. \cite{laz}).
We call a subset $\Delta \subset \Eff(V)$ {\it saturated} if the
following condition is satisfied: if a class
$d \in \Delta$ admits a split $d=d_1+d_2, d_1,d_2 \in \Eff(V)$ 
then both classes $d_1$ and $d_2$ are in $\Delta$.
\end{dfn}

 This condition assures that if an ``allowed'' (i.e.
from $\Delta)$ class in $\NS(V)$
is represented by a reducible or non-reduced curve, then its
irreducible and reduced components are
allowed as well. Other conditions of being saturated may lead to
different implications on the ranks of free quotients. We work with
finite saturated sets. Note that a minimal
saturated subset containing a finite set is finite (cf. Lemma
\ref{finite}). 

Since we are interested in $\pi_1(V\setminus \D)$, we consider curves $\D$
on smooth surfaces up to the following equivalence relation: two
curves $\D',\D''$ are equivalent if there is a topologically equisingular deformation
of $\D'$ into $\D''$. Such deformations do not alter the fundamental
groups of the complements and all the finiteness statements are made 
for such equivalence classes. 

We also say that a curve $\D$ is composed of curves of a pencil
\footnote{In footnote \cite{zar}, p.214, Zariski describes such a curve
as ``a curve made up of curves belonging to one and the same pencil''.} in a linear
system $H^0(V,{\cal O}(D))$ if there is a
partition of irreducible components of $\D$ into groups such that the
union, possibly with multiplicities, of irreducible components in each group is one of the curves in
the pencil. 

The results in this paper use the following assumptions:
\smallskip
\noindent \par {\it (*) All singular points of $\D$
belonging to more than one component are 
ordinary multiple points of the multiplicity equal to the number of
components containing  this singular point i.e. locally are transversal intersection of
smooth germs.} 
\label{ref:star1}
\smallskip
\noindent \par {\it (**) The surjections $\pi_1(V\setminus \D)
  \rightarrow F_r$  we consider are essential i.e. take each meridian
  of a component of $\D$ \footnote{i.e. a loop homotopic to the
    boundary of a small positively oriented disk transversal to this
    component at a smooth point.}
to an element in 
a conjugacy class of either one of $r+1$ generators of
$F_r=\langle x_1,...,x_{r+1}: x_1\cdot... \cdot x_{r+1}=1\rangle$}.
\label{ref:star2}
\smallskip 
\par With these conventions, we have the following:
\begin{theorem}\label{main} Let $V$ be a smooth simply connected
  projective surface. Let
$\Delta \subset \NS(V)$ be a saturated subset of the effective cone 
%i.e. having the property 
%that if $d_1 \in \Delta, d_2 \in \Eff(V)$ are such that $d_1-d_2$ is effective
%then $d_2, d_1-d_2 \in \Delta$. 
and let $\D$ be a reduced curve having classes of its irreducible
components in $\Delta$ and satisfying condition \hyperref[ref:star1]{\textrm{(*)}}. 
 Then there is a constant $M_2(V,\Delta)$ such that
if $\pi_1(V\setminus \D)$ admits an essential free quotient (i.e. for
which the
condition \hyperref[ref:star2]{\textrm{(**)}} is satisfied)
with $r >M_2(V,\Delta)$ 
then $\D$ is composed of curves in a pencil with the property that the class of its generic element in $\NS(V)$ is a class 
$\delta \in \Delta$. 

Moreover, there is a constant $M_1(V,\Delta) < 10$ such that for $r
>M_1(V,\Delta)$ there is only finite number
$N(V,r,\Delta)$ of isotopy classes of curves $\D$
with components from $\Delta$ and 
admitting a surjection $\pi_1(V\setminus \D)\rightarrow F_r$ but not composed of a pencil in
$H^0(V,{\cal O}(D))$ where $D$ is a divisor in a class  $\delta\in
\Delta$.
\end{theorem}

The implication of this result on the structure of quasi-projective
groups which are the fundamental groups of the complements to 
curves on a given surface with given classes of components is as
follows:

\begin{cor}\label{corolmain0} Given a saturated set $\Delta$ of
  classes in $\NS(V)$ consider the following trichotomy for the
   curves as in Theorem \ref{main} i.e. with classes of irreducible components in
  $\Delta$, satisfying
  conditions (*) and having a free essential quotient (i.e. satisfying (**)) of a fixed rank $r>1$.
\begin{enumerate}[{\rm (A)}]
 \item\label{corolmain0:A}
 There exist infinitely many isotopy classes of curves admitting an
 essential surjections $\pi_1(V\setminus \D)
\rightarrow F_r$.
\item\label{corolmain0:B} There are finitely many isotopy classes of curves $\D$ admitting
essential surjections $\pi_1(V\setminus \D) \rightarrow F_r, r>1$.
\item\label{corolmain0:C}
 $\D$ admitting
an essential surjection $\pi_1(V\setminus \D) \rightarrow F_r,$ is composed of
curves of a pencil having generic member in $\Delta$. There are finitely many isotopy classes of such
$\D$ for given $\Delta$. 
\end{enumerate}
All three cases are realizable at least for some $(V,\Delta)$.  Case
\eqref{corolmain0:B}
takes place for $r \ge 10$ for any $\Delta$. There
exists a constant $M_2(V,\Delta)$ such that for $r>M_2(V,\Delta)$ one has case \eqref{corolmain0:C}. 
In the latter case, $\pi_1(V\setminus
\D)$ splits as an amalgamated product $H*_{\pi_1(\Sigma)}G$ where
$\Sigma$ is a Riemann surface which is a smooth member of the pencil,
 $H$ is coming from a finite set of
groups associated with the linear system $H^0(V,{\cal O}(D))$,
$D$ is a divisor having class $\delta \in \Delta$ and $G$ is an extension:
\begin{equation}\label{extension}0 \rightarrow \pi_1(\Sigma) \rightarrow G \rightarrow F_r
\rightarrow 0
\end{equation}
\end{cor}

Note that the actual values of $r$ for which one has each of the cases
\eqref{corolmain0:A},\eqref{corolmain0:B} or \eqref{corolmain0:C}, varies for specific surfaces, see section \ref{examples}. 

\smallskip

The fundamental groups as in Theorem \ref{main} admit a
{\it geometric} surjections onto a free group i.e. which are induced by holomorphic maps onto punctured
$\PP^1$ (cf. proof of Theorem \ref{main}). In fact, 
Theorem \ref{main} is deduced from the following result on the reducible fibers of
primitive pencils of curves \footnote{we call a pencil \emph{primitive} if
  its generic member is irreducible cf. \cite{zar}}

\begin{theorem}\label{pencilsmain} Let $V,\Delta$ be as in Theorem \ref{main}. Then there is a
  constant $M_2(V,\Delta)$ such that for $r> M_2(V,\Delta)$, any
  primitive pencil of
  curves on $V$ such that:
\begin{enumerate}[{\rm (i)}] 
 \item\label{pencilsmain: i}  it has $r+1$ members whose classes of its irreducible
  components are in $\Delta$,
\item\label{pencilsmain: ii} all components of each of these $r+1$ members are reduced,
\item\label{pencilsmain: iii} the generic members of the pencil intersecting transversally at
  any base point have an element in $\Delta$ as a class.
\end{enumerate}
    Moreover, there
  exists a constant $M_1(V,\Delta)$ such that $2 \le M_1(V,\Delta) \le \min\{9,M_2(V,\Delta)\}$
 and such that for $r> M_1(V,\Delta)$ 
there are
  only finitely many types of primitive pencils satisfying \eqref{pencilsmain: i},\eqref{pencilsmain: ii}
 and \eqref{pencilsmain: iii}.
%  and having $r+1$ reducible
%  members with irreducible components having classes in $\Delta$.   
\end{theorem}

Theorems \ref{main} and  \ref{pencilsmain} are proven in section
\ref{proofmaintheorem} where we also give
the definitions of the classes of curves we consider and where we define the thresholds 
$M_i(V,\Delta), i=1,2,  N(V,r,\Delta)$ and $K(V,\Delta)$ controlling the
the fundamental groups of the complements to curves for various
ranks of their free quotients. One of the steps in the proof of
Theorem \ref{pencilsmain} (cf.(\ref{finalinequality})) is the following
inequality, which may have an independent interest:

\begin{cor}\label{keyinequality} Let $r+1$ be the number of reducible and
  reduced members in a pencil in a linear 
system $\PP(H^0(V,{\cal O}_V(D)))$, having
classes of irreducible components in a saturated set $\Delta$ and
 for which the union of 
components of these $r+1$ members satisfies condition (*). Then 
$$r+1\le 2{{e(V)+3D^2+2KD} \over {D^2+KD+\min(\sum e(F_j))}}$$
where the sum is taken over all irreducible components $F_j$ of a member of the pencil
and the minimum is taken over these $r+1$ members of the pencil.
\end{cor}

The rest of the paper deals with
examples and the values of the thresholds $M_i(V,\Delta),
N(V,r,\Delta)$ and $K(V, \Delta)$. 
 In section \ref{examples} we discuss the free
quotients for the curves on $\PP^2$ in the classes $\C(\Delta)$ where
$\Delta=\{[1],...,[k]\}$ i.e. with irreducible components having
degrees not exceeding $k$, and section \ref{surfaces}
 deals with examples on more general surfaces. More detailed 
estimates for the thresholds introduced here will be given elsewhere.
The proofs are based on the estimates of the topological Euler characteristic of
fibers of pencils in terms of Euler characteristics of components of
degenerate fibers but depends also on asymptotic of ranks of
cohomology of line bundles corresponding to nef
divisors (cf. \cite{laz}).

Finally, we want to thank anonymous referee for careful reading of the
manuscript and very useful comments. The first named author wants to thank the Fulbright Program and the Spanish
Ministry of Ciencia, Innovaci\'on y Universidades for the grant to
collaborate at UIC with the second named author.

\section{Summary of some notations}\label{summarysection}

$\Delta$ is a saturated subset of $\NS(V)$ where $V$ is a smooth projective
simply-connected surface.

\par \bigskip $M_2(V,\Delta) \in \ZZ_{+}$ (or simply $M_2(\Delta)$; the
same with similar notations below) is the threshold for the ranks
of free groups $F_r$ such that beyond it (i.e. for $r>M_2(V,\Delta)$) existence of surjection onto
free group of $\pi_1(V\setminus \D)$, where $\D$ is a curve with
classes of irreducible components in $\Delta$,  implies that $\D$ is a union of
curves beloning to a pencil in complete  linear system of a
curve with class in $\Delta$ (cf. end of the proof of the theorem \ref{pencilsmain}).

\par \bigskip $M_1(V,\Delta) \in \ZZ_{+}$ is the threshold for the ranks
of free groups $F_r$ above which (i.e. for $r>M_1(V,\Delta)$) there are only finitely
many isotopy classes of curves with fundamental groups of the
complements admitting surjection onto $F_r$ (cf. Theorems \ref{main}
and \ref{pencilsmain}). 

\par \bigskip $N(V,r,\Delta) \in \ZZ_{+}$ is the number of curves $\D$
with irreducible components having classes in a saturated set
$\Delta$, which admit a surjection onto a free group $F_r, r > M_1(V,\Delta)$
(cf. theorem \ref{main}). 

\par \bigskip $K(V,\Delta)$ is a positive integer such that for
pencils in $H^0(V,{\cal O}(D))$ where $D$ is such that for
any $\delta \in \Delta$ one has $D-K(V,\Delta)\delta \in \Eff(V)$, 
there are at most 12 reducible fibers with all irreducible components
on $\Delta$ (cf. Corollary \ref{corolmain}).

\par \bigskip $M_k=M_2(\PP^2,\Delta_k)$ where $\Delta_k$ is the
collection of classes of ${\cal O}_{\PP^2}(1),...,{\cal O}_{\PP^2}(k)$ 
(cf. section \ref{examples}).

%\par \bigskip 

\section{Proof of the main theorem}\label{proofmaintheorem}

In this paper, only finite saturated subsets of the effective cone are considered. The following result verifies
that the finiteness condition is preserved when considering the saturated subset spanned by a finite family of
divisor classes.

\begin{lemma}\label{finite} 
The saturated subset spanned by a finite subset of $\Eff(V)$ is finite. 
\end{lemma}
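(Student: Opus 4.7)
The plan is to unpack the definition, reduce to a statement about a single effective class, and then combine positivity against ample classes with a lattice argument.

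Setting
\[
\Delta_0 := \bigcup_{i=1}^k \{\delta \in \Eff(V) : d_i - \delta \in \Eff(V)\},
\]
I would first identify $\Delta_0$ with $\Delta(d_1,\ldots,d_k)$. Each $d_i$ lies in $\Delta_0$ (take $\delta = d_i$), and $\Delta_0$ is saturated: if $\delta \in \Delta_0$ with $d_i - \delta \in \Eff(V)$ and $\delta' \in \Eff(V)$ satisfies $\delta - \delta' \in \Eff(V)$, then $d_i - \delta' = (d_i - \delta) + (\delta - \delta')$ is a sum of effective classes and hence effective, so $\delta' \in \Delta_0$. Conversely, any saturated set containing the $d_i$ contains $\Delta_0$ by a direct application of the saturation property. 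It therefore suffices to prove that for a single $d \in \Eff(V)$ the set
\[
S(d) := \{\delta \in \Eff(V) : d - \delta \in \Eff(V)\}
\]
is finite, since $\Delta(d_1,\ldots,d_k)$ is then a finite union of such sets.

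To bound $S(d)$, I would choose ample classes $H_1, \ldots, H_\rho \in \NS(V) \otimes \RR$ whose images form a basis of $\NS(V) \otimes \RR$, where $\rho$ is the Picard number of $V$; such a basis exists because the ample cone is open and non-empty. For any $\delta \in S(d)$, the positivity of ample classes on effective divisors yields
\[
0 \leq H_j \cdot \delta \leq H_j \cdot d \quad\text{for } j = 1, \ldots, \rho,
\]
so the vector $(H_j \cdot \delta)_{j=1}^{\rho}$ lies in a bounded box in $\RR^\rho$.

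To finish, I would invoke the Hodge index theorem, which guarantees that the intersection form on $\NS(V) \otimes \RR$ is non-degenerate. This makes the map $\phi : \NS(V) \otimes \RR \to \RR^\rho$ sending $x$ to $(H_j \cdot x)_j$ a linear isomorphism, and therefore it carries the image of $\NS(V)$ modulo its finite torsion subgroup to a full-rank lattice in $\RR^\rho$. A lattice intersected with a bounded set is finite, and the restriction $\phi|_{\NS(V)}$ has only finite (torsion) fibres, so $S(d)$ is finite. The only non-elementary ingredient in the argument is the non-degeneracy of the intersection pairing, which is standard for smooth projective surfaces; everything else is assembling positivity statements and one lattice count.
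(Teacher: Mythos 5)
Your argument is correct and arrives at the same conclusion as the paper's, but packages the geometry slightly differently. The paper works with a single ample class $H$: it observes that $(\delta,H)\le (d,H)$ for all $\delta\in\Delta(d)$, and then appeals (via the Hodge index theorem) to the fact that $\overline{\Eff(V)}\otimes\RR$ is a cone over a compact base, so the slice $\{S : (S,H)\le(d,H)\}$ is compact; $\Delta(d)$ being a discrete subset of that compact set, it is finite. You instead pick $\rho$ ample classes $H_1,\ldots,H_\rho$ forming a basis of $\NS(V)\otimes\RR$, bound each coordinate $H_j\cdot\delta$ in an interval $[0,H_j\cdot d]$, and use non-degeneracy of the intersection pairing (again Hodge index) to see that $x\mapsto(H_j\cdot x)_j$ is a linear isomorphism carrying the Néron--Severi lattice to a full-rank lattice in $\RR^\rho$; a lattice meeting a bounded box is finite. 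The upshot is the same: positivity of ample classes against effective ones confines $\Delta(d)$ to a bounded region, and the lattice structure of $\NS(V)$ does the rest. Your version is more self-contained in that it replaces the "pointedness of the pseudo-effective cone, hence compactness of the slice" step with an explicit box-plus-lattice count, at the modest cost of invoking $\rho$ ample classes instead of one. Your reduction to $\Delta(d_1,\ldots,d_k)=\bigcup_i\Delta(d_i)$, including the verification that this union is indeed the saturated hull, also mirrors the paper's final line but spells out why the minimal saturated set is that union. The handling of torsion in $\NS(V)$ is a nice extra care (the paper implicitly works modulo torsion, which is harmless since the torsion subgroup is finite, and in the simply connected case absent altogether). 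No gaps.
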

\begin{proof} Let $H$ be an ample divisor.  For $d_1,...,d_s, d_i \in
  \Eff(V)$, let $\Delta(d_1,...,d_s)$ be the intersection of all saturated
  subsets containing $d_1,...,d_s$. For any $\delta \in
  \Delta(d), d \in \Eff(V)$ one has $(\delta,H)\le (d,H)$. Therefore 
$\Delta(d)$ is a discrete subset of a compact set $\{S \in \overline
{\Eff(V)} \otimes \RR \vert (S,H) \le (d,H) \}$ (since, as follows from
the Hodge index theorem, $\Eff(V)\otimes \RR$ is a cone over a compact
set). Hence it is finite. Alternatively, the claim can be derived from
\cite{debarre}  Theorem 4.10b. Finally, if $d_1,..,d_s$ is a finite
subset of $\Eff(V)$ then since for their span one has $\Delta(d_1,..,d_s)=\bigcup
\Delta(d_i)$ and hence it is finite as well. 
\end{proof}

The following will be used to obtain estimates of the number of
members of pencils on a surface $V$ having
classes of irreducible components in a fixed saturated subset $\Delta$.

\begin{prop}\label{keyprop} Let  $\Delta$ be a saturated subset of $\Eff(V)$ and
  $s=\Card \Delta$. Let $\F$ be a curve with irreducible components
  $F_j, j=1,...,\J, \J>1$ which moves in a pencil such that
  mutiplicity of each component of $\F$ in this pencil is 1 \footnote{i.e. all components
  $F_j$ of this particular element of the pencil are reduced; there is no
  restrictions on multiplicities of components of other members of
  the pencil.}.  Let $[\F]=D=\sum_{i=1}^s m_id_i$ be the class of $\F$ in the free abelian group
generated by $d_i \in
\Delta \subset \NS(V)$.
For all $\alpha >{5 \over 3}$ and all but finitely many $(m_1,...,m_s) \in \ZZ_{\ge 0}^s$ 
one has 
\begin{equation}\label{keyineq}
{{{e(V) \over 3}+D^2+{2 \over 3}KD} \over
{D^2+\sum_{j=1}^{\J} e(F_j)+KD}} <\alpha
\end{equation}
($e(F_j)$ denotes the topological Euler characteristic of the component $F_j$).
\end{prop}
\begin{proof} Firstly, observe that the assumptions of
  Proposition imply that the denominator in (\ref{keyineq}) is
  non-zero. Otherwise, the equality given by adjunction formula for $\F$ holds
  with no corrections and 
hence $\F$ must be a smooth curve. Since we assume that $\J>1$, the
curve $\F$ is disconnected, and by Zariski connectedness theorem, generic fiber
in the pencil in which $\F$ moves must be disconnected.  
In this case, the element of the pencil corresponding to the branch point of 
the covering of $\PP^1$ given by Stein factorization must be
non-reduced which contradicts our assumption. 

Next note that one has 
\begin{equation}\label{firststep}e(F_j)=-KF_j-F_j^2+\sum_{P \in
    \Sing(F_j)}(2\delta(F_j,P)-b(F_j,P)+1)
\end{equation} where $P$ runs through the set
  $\Sing(F_j)$ of singular points of the curve $F_j$ 
and $\delta(F_j,P),b(F_j,P)$ are respectively the
  $\delta$ invariant and the number of branches of $F_j$ at $P$
  (cf. \cite{serre}). Each summand in summation in
  (\ref{firststep}) is non-negative. Hence
the denominator in (\ref{keyineq}) satisfies:
\begin{equation}\label{secondstep}
D^2+\sum_{j=1}^{\J} e(F_j)+KD \ge D^2-\sum m_iKd_i-m_id_i^2+KD
=D^2-\sum m_id_i^2.
\end{equation}
Since we assume that the pencil consists of only reduced
members, in the decomposition $D=\sum_{i, d_i^2 \ge 0}m_id_i+\sum_{i, d_i^2<0}
m_id_i$, for coefficients of $d_i$ for which $d_i^2 <0$, one has $m_i=1$. Indeed,
two numerically equivalent irreducible curves with negative self-intersection which appears
in a reducible member
$\F$ of the pencil more
than once and
must coincide, since they cannot be deformations of each other.
 Therefore
$D^2-\sum m_id_i^2=\sum (m_i^2-m_i)d_i^2+2\sum m_im_jd_id_j >0$. 
and hence the inequality (\ref{keyineq}) would follow from  
\begin{equation}\label{modified}
 (\alpha-1)D^2-\alpha \sum m_id_i^2-{2 \over 3}KD >{{e(V)}\over 3}.
\end{equation}

Indeed, from (\ref{secondstep}) and (\ref{modified}) one has 
$$\alpha(D^2+\sum_{j=1}^{\J} e(F_j)+KD) \ge \alpha(D^2-\sum
m_id_i^2)>D^2+{2 \over 3}KD+{{e(V)}\over 3}.$$

% Let $\C$ be the cone in $\NS(V) \otimes \RR$ generated by
 % the curves $d_i$. The sublattice $\L=\{\sum m_id_i, m_i \in \ZZ\}$
 % of $\NS(V) \subset \NS(V)\otimes \RR$ induces semigroup of its elements
 % which belong to the the cone $\C \cap NE(V)$. By Gordan's lemma this
 % semigroup is finitely generated. Denote by $C_1,..,C_K$ its 
 % generators. Since $D \in \C$ one has $D=\sum
 % c_iC_i, c_i\in \ZZ_{>0}$

\par 
%Let $$A(d_1,...,d_s)=\{\sum m_id_i \vert d_i \in \Delta,
%m_i \ge 0 \}\subset \RR^s$$ 
%and $\tilde A(d_1,..,d_r)$ be its image in $\NS(V)$. 
To show (\ref{modified}), let $D=\sum
m_id_i, d_i \in \Delta$ be a divisor
which is a reducible member of a pencil (i.e. $\dim H^0(V,{\cal O}(D)) \ge
2$). 
% which has all elements reduced and such that its class in $\NS(V)$ belongs
%to $\tilde A(d_1,...,d_r)$. 
%Let ${5 \over 3}< \alpha \in \RR$. 
By Riemann-Roch 
$$-DK=2(\dim H^0({\cal O}(D))-\dim H^1({\cal O}(D))+\dim H^2({\cal O}(D))-2\chi(V)-D^2.$$
The asymptotic behavior of the cohomology of nef divisors (cf. the proof of Theorem
1.4.40 \cite{laz} p.69 or \cite{kol}) implies that for all but finitely many
$(m_1,...,m_s)$, i.e. those in the compact subset of
$\overline{\Eff(V)} \subset \NS(V)\otimes \RR$ where $\dim H^1({\cal O}(D))$ exceeds the dimension
of $H^0({\cal O}(D))$, one has $\chi({\cal O}(D)) \ge 0$. For those
$(m_1,...,m_s)$, 
\begin{equation}\label{rrconseq}
-DK \ge -2\chi(V)-D^2.
\end{equation}
%\label{intermed1}
Hence for all but finitely many $(m_1,...,m_s) \in \ZZ_{\ge}^s$ 
we have the following inequality for the left hand side of (\ref{modified}):
\begin{equation}(\alpha-1)D^2-\alpha \sum m_id_i^2-{2 \over 3}KD \ge 
(\alpha-{5 \over
  3})D^2-\alpha \sum m_id_i^2-{4 \over 3}\chi(V).
\end{equation}

%Since $dim H^0(\O(D)) \ge 2$ one has
%$$(\alpha-1)D^2-\alpha \sum m_id_i^2-{2 \over 3}\sum m_iKd_i 
%=$$
%$$(\alpha-1)D^2-\alpha \sum m_id_i^2+{4 \over 3}(dim H^0(\O(D))-dim
%H^1(\O(D)+dim H^2(\O(D)))-\chi(V)-{{D^2}\over 2})$$

It only remains to show that if $\alpha>{5 \over 3}$ then for all but finitely many $m_i$ one has 
\begin{equation}\label{intermed2}(\alpha-{5 \over
  3})D^2-\alpha \sum m_id_i^2>{{e(V)}\over 3}+{4 \over 3} \chi(V).
\end{equation}
To see (\ref{intermed2}) note that, as was already mentioned, in the decomposition $D=\sum_{i, d_i^2 \ge 0}m_id_i+\sum_{i, d_i^2<0}
m_id_i$, since we assume that the pencil consists of only reduced
members, for $d_i$ with  $d_i^2 <0$ one has $m_i=1$. Hence for left
hand side of (\ref{intermed2}) one has
$$(\alpha-{5 \over
  3})D^2-\alpha \sum m_id_i^2= $$
$$\alpha\sum_{i, d_i^2 \ge 0} (m_i^2-m_i)d_i^2-{5 \over 3} \sum_{i}
m^2_id_i^2+2(\alpha-{5 \over 3})\sum_{i<j} m_im_jd_id_j \ge$$\
$$(\alpha-{5 \over 3})\sum_{i, d_i^2 \ge 0} m_i^2d_i^2- \alpha \sum_{i, d_i^2>0} m_id_i^2
+2(\alpha-{5 \over 3})\sum_{i<j} m_im_jd_id_j \ge
$$
$$2(\alpha-{5 \over 3})\sum_{i<j} m_im_jd_id_j $$
since for $\alpha>{5 \over 3}$ in the left hand side of the last inequality
the first two terms are either zero (if $d_i^2=0$ for all $i$) or
give a quadratic function in $m_i \ge 0$
with the terms of degree 2 representing a positive definite diagonal quadratic
form  (in variables $m_i$). Hence one has the last inequality for all $(m_1,...,m_s) \in \ZZ_{\ge}^s$ but a finite
set since exceptions are given by solutions of the opposite
inequality which belong to a compact subset of $(\RR_{\ge})^s$.
Finally 
$$2(\alpha-{5 \over 3})\sum_{i<j} m_im_jd_id_j>
 {{e(V)}\over 3}+ {4 \over 3} \chi(V)$$
for $\alpha>{5 \over 3}$ 
since, as was shown earlier, $\F$ contains intersecting irreducible components.
This shows (\ref{intermed2} for all but finitely many $(m_1,...,m_s)$
and hence the result follows.
\end{proof}

\begin{proof} [Proof of Theorem \ref{pencilsmain}] 
%First notice that
 % the assumption (**) implies that reducible members of the pencil
 % have reduced components. Indeed, restriction of the map onto $\PP^1$
 % corresponding to the pencil on a small disk transversal to component $D_i$
 % of the fiber $\sum m_iD_i$ of the pencil is given by $z \rightarrow
 % z^{m_i}$. Hence the corresponding map of the fundamental groups takes
 % meridian of the component to a conjugate of $x_s^{m_i}$ where $x_s$
 % is generator of the fundamental group of the complement in $\PP^1$
 % running around the point  corresponding to the fiber $\sum m_iD_i$. 
%In particular, the assumption (**) implies that irreducible components
%of the pencil we consider
%all are  reduced.
  
Let $B \subset V$ be the base
  locus of a pencil satisfying conditions (\ref{pencilsmain:
    i}),(\ref{pencilsmain: ii}),(\ref{pencilsmain: iii}). 
  Assumption 
 \eqref{pencilsmain: iii}  implies that the pencil is free of fixed
  components, the rational map corresponding to
  the pencil extends to a regular map $\Phi: \tilde V \rightarrow 
\PP^1$ on the blow up $\tilde V$ of $V$ at $B$ and that $\Card B=(\Phi^{-1}(b)
      \cdot \Phi^{-1}(b)), b \in \PP^1$.  Let $B' \subset \PP^1$ the set of critical
      values of $\Phi$ and let $R \subset B'$ be the subset
      corresponding to the $r+1$ members satsifying the conditions (\ref{pencilsmain: i})-(\ref{pencilsmain: iii})
      of the statement of the Theorem.  For  $b'\in B'$ and any $p \in \PP^1\setminus B'$, let $e_{rel}(b')=e(\Phi^{-1}(b'))-e(\Phi^{-1}(p))$ be
      the relative Euler characteristic of the fiber at $b'$ (this is
      independent of $p$). 
   %Since
    %  the assumption of the theorem on meridians implies that the
     % multiplicities of all components of the pencil
     % $\Phi$ are equal to 1,
 It follows from the additivity of the Euler
      characteristic (cf. also \cite{iversen}) that
\begin{equation}  
    e(V)+\Card B=2e(\Phi^{-1}(p))+\sum_{b'\in B'} e_{rel}(b')
%2e(\Phi^{-1}(p))+\sum_{b'\in B'} \mu(b')
\end{equation}
 where $p \in \PP^1 \setminus B'$.
% and $\mu(b')$ is the sum of the
% Milnor numbers of the singularities in the fiber of
% $\Phi$ over $b'$ (the last equality uses reducibility of the
%components of fibers of the pencil).
 It follows by the adjunction applied on $V$, that for $p \in \PP^1\setminus B'$ one has 
 $e(\Phi^{-1}(p))=-(KD+D^2)$ where $D
 \subset V$ is the class of any fiber of $\Phi$.
  Note that for any $b' \in B'$ one has $e_{rel}(b')  \ge 0$
  (cf. \cite{iversen} or \cite{shafar}, Ch.4, Theorem 6 and 7)
and, as was mentioned, $\Card B=D^2$. 
Therefore
\begin{equation}
\begin{aligned}
   e(V)+D^2 & =-2(KD+D^2)+\sum_{b'\in R} e_{rel}(b')+\sum_{b'\in
     B'\setminus R} e_{rel}(b') \\
& \ge -2(KD+D^2)+\sum_{b'\in R} e_{rel}(b')
\end{aligned}
\end{equation}

 For any $b' \in
 R$, such that the class of fiber $\Phi^{-1}(b')$ is $D=\sum m_id_i$,
 using as a lower bound for the Euler characteristic of a reducible
 curve on a surface, the expression for the Euler characteristic in
 the case when all irreducible components are smooth and
 intersect transversally at distinct points, one has 
\begin{equation}\label{eq:erel}
   e_{rel}(b') \ge 
\sum -m_id_i(K+d_i)-\sum {{m_i(m_i-1)}\over 2}d_i^2- \sum_{i<j}
m_im_jd_id_j+D(K+D).
\end{equation}\label{relineq}
Here the first term represents the sum of Euler characteristics of
smooth irreducible components, the second is the count of intersection
points among the $m_i$ curves in each class $d_i$, the third term is
the number of intersection points among curves in classes $ d_i,d_j,
i\ne j$ and last term is negative of the Euler characteristic of the
smooth fiber of $\Phi$. Replacing $D$ by $\sum m_id_i$ we obtain
by~\eqref{eq:erel} that $e_{rel}(b'), b'\in R$ is greater than or equal to:
$$-\sum m_iKd_i+\sum d_i^2(-m_i-{{m_i(m_i-1)}\over 2}+m_i^2)+\sum_{i<j}
d_id_j(2m_im_j-m_im_j)+\sum m_iKd_i$$
$$=\sum d_i^2{{m_i(m_i-1)}\over 2}+\sum_{i<j} d_id_jm_im_j=$$
$${{D^2
-\sum m_i{(d_i^2)}}\over 2}={{D^2+KD+\sum_{F_j \in \Phi^{-1}(b')} e(F_j)} \over 2}.$$
Selecting $b'\in R$ for which ${{D^2
-\sum m_i{(d_i^2)}}\over 2}$ is the smallest 
%and letting $r+1=\Card R$ 
%be the number of reducible fibers with classes of components in
$\Delta$, 
one obtains 
\begin{equation}
   e(V)+D^2 \ge -2(KD+D^2)+(r+1){{D^2+KD
+\sum_{F_j \in \Phi^{-1}(b')}
 e(F_j)}\over 2}.
\end{equation}
Hence the last inequality implies that for $\alpha$ as the Proposition \ref{keyprop}
one has:  
\begin{equation}\label{finalinequality}
   r+1\le 2{{e(V)+3D^2+2KD} \over {D^2+KD+\sum e(F_j)}} <6\alpha.
\end{equation}
From Proposition \ref{keyprop}, for $\alpha>{5 \over 3}$ we obtain that, with only finitely many
exceptions $\Xi=\{D \mid D=\sum m_iD_i,  D \notin \Delta \}$,
\footnote{i.e. the set of linear systems in the semigroup generated by
  classes in $\Delta$ which are outside of~$\Delta$.} a pencil
in the linear system $H^0(V,{\cal O}(D))$ will have no more than $10$ reducible fibers with
all components in $\Delta$. Hence if $r$ is such that there are
infinitely many pencils having $r+1$ reducible fibers with components
in $\Delta$ then $r+1 \le 10$. Denoting by $M_1(V,\Delta)$ the largest $r$
such that there are infinitely many pencils with $r+1 \ge 3$ members
having irreducible components in $\Delta$, we see that
$2 \le M_1(V,\Delta)\le 9$.

Since each linear system may have only a
finitely many isotopy classes of pencils (it is bounded by the number of strata in
a stratification of the set of pair $(l,Disc)$ where $l$ is a line and
$Disc$ is the discriminant in the complete linear system $\PP(H^0({\cal
 O}(D))$), this may create only a finite set of pencils in $H^0({\cal O}(D))$ with $D \in
\Xi$ whose classes of irreducible components are in $\Delta$ for $r+1> 10$
of its members. 
 If $M_2(V,\Delta)+1$ is the maximal number of reducible members
in this finite set of pencils, then for $r>M_2(V,\Delta)$ the class of
members of the pencil will be in $\Delta$. This 
shows the theorem.
\end{proof}

\begin{proof} [Proof of theorem \ref{main}.] 
%Let $\D$ be a curve as
 % in theorem \ref{main}. Assume that $\D$ is not composed of curves
 % in a pencil in a linear system $H^0(V,{\cal O}(\delta)),\delta \in \Delta$.  Note that without
  %    this assumption, for a pencil of curves in a class
   %   $\delta \in \Delta$, one can construct a curve $\D$, which is a union of an
    %  arbitrary large number $N$ of members of this pencil and for
     % this curve the fundamental a group of the complement has
     % an essential surjection onto $F_{N-1}$ (cf. proof
     % of Corollary \ref{corolmain0}\eqref{corolmain0:C}).

     Note that existence of a surjection $\pi_1(V \setminus \D)
      \rightarrow F_r, r \ge 2$, by work  \cite{arapura}, implies that there
      is a surjective holomorphic map with connected fibers
      $V\setminus \D \rightarrow C \setminus R$, 
     where $C$ is a smooth curve and $R \subset C$ is a finite set
     such that ${\rm rk}H^1(C\setminus R) \ge r+1$.\footnote{condition
       (**), i.e. that surjection is essential, is used later in the proof.}  Indeed, for
     any $\chi \in {\rm Char} F_r$ the lower degree terms sequence 
     corresponding to the Hochschild-Serre spectral sequence $$H^p(F_r,H^q(K,\CC))
     \Rightarrow H^{p+q}(\pi_1(V\setminus \D,\tilde \chi)$$ of the extension
     $0\rightarrow K \rightarrow \pi_1(V\setminus \D) \rightarrow F_r
     \rightarrow 0$, where $\tilde \chi$ is the pullback of the
     character $\chi$ to the character of $\pi_1(V\setminus \D)$,
      implies that $0 \rightarrow H^1(F_r,\chi) \rightarrow
      H^1(\pi_1(V\setminus \D,\tilde \chi)$. Since ${\rm dim
        Char}F_r=r$ and ${\rm rk} H^1(F_r,\chi) \ne 0 \ (\chi \ne 1,
      r>1)$, it follows that a surjection onto $F_r$ belongs to 
      an irreducible component $\Sigma, \dim \Sigma \ge r$ of the
      characteristic variety containing the torus of the 
      characters $\tilde \chi$ of $\pi_1(V\setminus \D)$ with non-vanishing
      cohomology. The component $\Sigma$ is the pullback of the torus
      $\Char H_1(C\setminus R)$ via an admissible map $V\setminus \D
      \rightarrow C\setminus R$ of the Theorem 1.6 of  \cite{arapura}. 
    
      %Since we assume
      %that $\pi_1(V\setminus \D) \rightarrow F_r$ is essential it
      %follows that $\Card R=r+1$. 
      %The generic curve  in this pencil does not belong to $\Delta$ since
      %$\D$ is not composed of a pencil as above  and in particular
      %each fiber over a point in $R$ is reducible since its class is
      %the class of a generic member in the pencil \footnote{This
       % pencil, of course, may have reducible fibers over points in
       % $\C\setminus R$ and its components do not need to have classes
       % in $\Delta$.}

      This map extends to a map having indeterminacy points at a subset of $V$ of
      codimension two. More specifically, the map is well defined
      outside of a finite subset $B \subset V$
      which is a subset of the set of intersections of components of
      $\D$ i.e. subset of the union of 
     $D'\cap D''$ where $D',D''$ run through the set of pairs of irreducible
      components of $\D$. Moreover, since we assume (cf. (*)) that the intersections of
      the components of $\D$ are transversal, this map extends to a holomorphic
      map $\Phi: \tilde V \rightarrow C$ of the single blow up of $V$
      at each of the   indeterminacy points on $V$.  The generic fiber of this map is
     % smooth by Bertini's theorem and hence by Zariski connectedness
     % theorem it is
      irreducible since the map $V\setminus\D \rightarrow C\setminus R
      $ is admissible.   One has $\pi_1(\tilde V)\rightarrow \pi_1(C)$ and since we assume $\pi_1(V)=0$ this
      shows that $C=\PP^1$ and $\Card R \ge r+1$.
      Moreover, $\Phi^{-1}(R) \subset \tilde V$ can be
      identified with $\D$ since due to our assumption on the intersection of
      the components on $V$ no new components introduced as result
      of elimination of base points of the pencil and since
      component of $\D$ satisfy condition (*). In particular, this
      pencil satisfies conditions \eqref{pencilsmain:
        i},\eqref{pencilsmain: iii} of Theorem \ref{pencilsmain} for 
       $\Card R$ of its fibers. The condition
       \eqref{pencilsmain: ii} for this pencil, i.e. that the fibers
       over $R$ are reduced \footnote{outside of a finite subset $B'
      \subset \PP^1$ the smooth fibers of the holomorphic map $\Phi$ are
      diffeomorphic and one has the inclusion $R \subseteq B'$. This
        pencil, of course, may have reducible fibers over points in
        $C\setminus R$ and its components do not need to have classes
        in $\Delta$.} follows from (**). Indeed, restriction of the
      map $\Phi: \tilde V \rightarrow \PP^1$ on a small disk transversal to component $D_i$
  of the fiber $\sum m_iD_i$ of the pencil is given by $z \rightarrow
  z^{m_i}$. Hence the corresponding map of the fundamental
  groups$\pi_1(V\setminus \D) \rightarrow \pi_1(\PP^1\setminus R)$ takes
  meridian of the component to a conjugate of $x_s^{m_i}$ where $x_s$
  is generator of the fundamental group of the complement in $\PP^1$
  running around the point  corresponding to the fiber $\sum m_iD_i$. 
 Since the sequence of inclusions $\Char F_r \subset \Sigma= \Phi^*(Char H_1(\PP^1\setminus R))
 \subset \Char \pi_1(V\setminus \D)$ is dual to composition
 $H_1(V\setminus \D,\ZZ) \rightarrow H_1(\PP^1\setminus R,\ZZ) \rightarrow
 F_r/F_r'$ and composition of these map by assumption (**) of the
 Theorem \ref{main} takes a meridian of a component of $\D$ to an
 indivisible element in $F_r/F_r'$, so is the case for the map
 $H_1(V\setminus \D) \rightarrow H_1(\PP^1\setminus R)$  and hence all
 components of $\D$, considered as components of members of the pencil
 $\Phi$ are reduced and \eqref{pencilsmain: ii} is satisfied.
Now the Theorem \ref{main} is an immediate consequence of Theorem \ref{pencilsmain}.
%       This
%      implies that if one has the surjection $\pi_1(V\setminus \D)
 %     \rightarrow F_r$ and  $r>M_2(V,\Delta)$ then the pencil given by
  %    this holomorphic map satisfies  the conditions of Theorem
   %   \ref{pencilsmain} (i.e. (**) and $\D$ is a union of $r+1$
    %  reducible fibers) and hence, as follows from this theorem, has
    %  the class of a generic member in $\Delta$.  The rest of the claims of
    %  Theorem \ref{main} follow from Theorem \ref{pencilsmain} similarly.
 \end{proof}

The inequalities considered in the proof of the Theorem
\ref{pencilsmain} have the following consequence, giving 
under special assumptions, a replacement of the part of conclusion of 
theorem \ref{pencilsmain} stipulating possibility of ``finitely many
exception''. 

\begin{cor}\label{corolmain}
Let $\Delta \subset \NS(V)$ denote a saturated subset. 
Assume that either $\Delta$ contains a class $d$ such that $d^2>0$ or
that one has $d^2<0,Kd \le 0$ for all classes in $\Delta$.
Then there exists a smallest constant $K(V,\Delta) \in \ZZ_+$ such that for a pencil of curves
having class $D \in \NS(V)$ satisfying the inequality $D>d$ 
\footnote{i.e. $D-d \in \Eff(V)$.}
for all
$d\in K(V,\Delta)\Delta$ where 
$${K(V,\Delta)}\Delta=\{d \in \NS(V) \mid d=\sum m_id_i, d_i\in \Delta,
m_i>K(V,\Delta)\}$$ 
the number $r(V)+1$ of reducible fibers with components in $\Delta$ is
11 or less.
\end{cor}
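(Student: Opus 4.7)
The plan is to repeat the argument proving Theorem~\ref{main} with $\alpha = 2$ in place of $\alpha = 5/3$ in inequality~(\ref{finalinequality}); since (\ref{finalinequality}) then gives $r \le 6\alpha = 12$, the problem reduces to strengthening Proposition~\ref{keyprop} so that inequality~(\ref{keyineq}) holds with $\alpha = 2$ for every $D = \sum m_i d_i$ with $m_i > K(V,\Delta)$ for all $i$, rather than only outside an unspecified finite exceptional set.

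Following the proof of Proposition~\ref{keyprop} step by step with $\alpha = 2$, the task reduces to verifying
\[\tfrac{1}{3}D^2 - 2\sum_i m_i d_i^2 > \tfrac{e(V)}{3} + \tfrac{4}{3}\chi(V)\]
uniformly on the region $\{m_i > K(V,\Delta)\}$. Expanding $D^2 = \sum_i m_i^2 d_i^2 + 2\sum_{i<j} m_i m_j (d_i\cdot d_j)$ and regrouping, the left hand side becomes
\[\tfrac{1}{3}\sum_i m_i(m_i-6)\,d_i^2 \;+\; \tfrac{2}{3}\sum_{i<j} m_i m_j\,(d_i\cdot d_j),\]
in which the cross terms are non-negative because distinct classes of $\Delta$ appearing in a reducible fiber $\F$ are represented by distinct irreducible components of $\F$, and so have non-negative mutual intersection on $V$. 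If some $d_0\in\Delta$ has $d_0^2 > 0$, the summand indexed by $d_0$ grows quadratically in its coefficient $m_0$, so choosing $K(V,\Delta)$ large enough that $\tfrac{1}{3}K(V,\Delta)(K(V,\Delta)-6)\,d_0^2 > \tfrac{e(V)}{3} + \tfrac{4}{3}\chi(V)$ forces the whole expression to exceed the right hand side. In the complementary case, where every $d\in\Delta$ satisfies $d^2 < 0$ and $Kd < 0$, the reducedness assumption built into Proposition~\ref{keyprop} caps each $m_i$ at $1$, so each summand in the first sum contributes $-(5/3)d_i^2 > 0$; together with $e(d) = -Kd - d^2 > 0$ coming from $Kd < 0$, the coercivity argument of Proposition~\ref{keyprop} yields the strict inequality as soon as $D$ dominates a sufficiently large element of $K(V,\Delta)\Delta$.

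The principal obstacle is reconciling the two cases with the uniform phrasing ``$D > d$ for all $d\in K(V,\Delta)\Delta$'': in the first case this reads as a genuinely coefficient-wise bound $m_i > K(V,\Delta)$, whereas in the second case it translates, via the reducedness hypothesis, into ``$D$ has at least $K(V,\Delta)$ distinct components drawn from $\Delta$.'' Once these two quantitative notions of largeness are in place, the remainder of the proof of Theorem~\ref{main}, namely the chain culminating in (\ref{finalinequality}), applies verbatim with $\alpha = 2$ and delivers the bound $r(V) \le 12$.
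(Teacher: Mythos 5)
Your approach is in spirit the paper's: specialize $\alpha=2$ in~(\ref{finalinequality}) to get $r\le 12$, and show the hypothesis of Proposition~\ref{keyprop} holds uniformly once the coefficients $m_i$ exceed a threshold. Your regrouping of $\tfrac13D^2-2\sum m_id_i^2$ as $\tfrac13\sum m_i(m_i-6)d_i^2+\tfrac23\sum_{i<j}m_im_j\,d_i\!\cdot\! d_j$ and the observation that the cross terms are non-negative are both correct and mirror what the paper does. But there are two genuine gaps.

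First, you verify the intermediate inequality~(\ref{intermed2}) (the one with $\chi(V)$ on the right), rather than~(\ref{modified}) itself. To get from~(\ref{intermed2}) to~(\ref{modified}) you still need the Riemann--Roch asymptotic $-KD\ge -2\chi(V)-D^2$, and in Proposition~\ref{keyprop} that step is only valid for all but finitely many $D$. So your argument, as written, still leaves an unspecified finite exceptional set — the very thing the corollary is trying to eliminate. The fix is easy (the exceptional set is bounded, so enlarge $K(V,\Delta)$ to clear it), but you should say so. The paper sidesteps this entirely by proving~(\ref{modified}) at $\alpha=2$ directly: it keeps the $-\tfrac23KD$ term and works with explicit one-variable polynomials $f_i(m)=m^2d_i^2-2md_i^2-\tfrac23mKd_i$ (with an extra constant in $f_1$ to absorb the contributions from classes with $d_j^2\le 0$, $Kd_j>0$), defining $K(V,\Delta)$ as the largest root. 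That route involves no asymptotic and hence no residual exceptions.

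Second, your handling of the case where every $d\in\Delta$ has $d^2<0$, $Kd<0$ is hand-waved: ``the coercivity argument of Proposition~\ref{keyprop} yields the strict inequality'' is not a proof — Proposition~\ref{keyprop} only delivers an ``all-but-finitely-many'' statement, not a uniform threshold, and reducedness forces all $m_i=1$ so the quadratic-growth mechanism you use in the first case is unavailable. You flag the resulting tension with the phrase ``$D>d$ for all $d\in K(V,\Delta)\Delta$'' as an obstacle, which is a legitimate observation (and one the paper itself glosses over, since its proof only constructs the $f_i$ assuming some $d_1^2>0$); but flagging the issue is not the same as resolving it. You would need to spell out what ``largeness'' of $D$ means in that branch (e.g.\ lower bound on the number of components) and run the estimate with $m_i\equiv 1$ and $e(d_i)=-Kd_i-d_i^2>0$ to see what the denominator in~(\ref{keyineq}) actually gives.
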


\begin{rem} Recall that we are considering only the pencils subject to 
condition on meridians stated in  the theorem \ref{main}. It 
excludes the pencils with all components of reduced fibers being in
$\Delta$ having only classes $d$ with $d^2<0$ in $D=\sum m_id_i,
m_i>1$. Proposition \ref{negative} below shows that there are pencils
having arbitrary large number of reducible components 
with negative self-intersections and positive intersection with canonical
class (albeit on different surfaces). 
\end{rem}

\begin{rem}
In the case $(V,\Delta)=(\PP^2,[1],...,[k])$ the constant $K(V,\Delta)$
represents the threshold for the degrees of pencils 
having ``large number'' of members with degrees of components at most
$k$. The ``large'' means more than $12$ but it is smaller for small
$k$ (cf. section \ref{examples}).
\end{rem}

\begin{proof} [Proof of Corollary \ref{corolmain}.]
  Consider the inequality (\ref{modified}) for $\alpha=2$ i.e.
\begin{equation}\label{modified1}D^2-2\sum m_id_i^2-{2 \over 3}KD >{{e(V)}\over 3}.
\end{equation}
We want to show that there is $K(V,\Delta)$ such it holds for 
$D=\sum m_id_i$ satisfying $m_i>K(V,\Delta)$  for
all  $i$. 

Let $d_1 \in \Delta$ be class such that $d_1^2>0$. Let 
$K(V,\Delta)$ be the maximum of the real roots of following polynomials $f_i(m)$ or
$1$ where
$$f_1(m)=m^2d_1^2-2md_1^2-{2 \over 3}mKd_1-{{e(V)}\over 3}$$
and 
$$f_i(m)=m^2d_i^2-2md_i^2-{2 \over 3}mKd_i$$
for each $i>1$ such that $d_i^2>0$.

Then for $D=\sum m_id_i, m_i > K(V,\Delta)$ for each 
$i$ with $d_i^2>0$ one has $f_i(m_i)>0$ and using
$$D^2=\sum_{i, d_i^2>0} m_i^2d_i^2+\sum_{i,d_i^2<0}m_i^2d_i^2 +2\sum_{i<j} m_im_jd_id_j$$
one obtains:
$$D^2-2\sum m_id_i^2-{2 \over 3}KD-{{e(V)} \over 3}=$$
$$\sum_{i,d_i^2>0} m_i^2d_i^2+2\sum_{i,j}
m_im_jd_id_j-2\sum_{i,d_i^2>0}m_id_i^2+
\sum_{i, d_i^2<0} (m_i^2-2m_i)d_i^2$$
$$-{2 \over 3}\sum_{i, d_i^2>0}m_iKd_i-
{2 \over 3}\sum_{i, d_i^2<0} m_iKd_i-{{e(V)}\over 3} \ge $$
(since there are no classes $d_i^2<0, Kd_i>0$ by assumption)
$$\sum_{i, d_i^2>0}m_i^2d_i^2-2\sum_{i, d_i^2>0}m_id_i^2-{2 \over
3}\sum_{i, d_i^2>0}m_iKd_i-{2 \over
3}\sum_{j,d_i^2<0,Kd_i<0}m_iKd_i-{{e(V)}\over 3} \ge $$
$$\sum_i f_i(m_i)>0.$$

The first inequality uses that $m_i=1$ for curves with $d_i^2<0$ 
as was pointed right after the inequality (\ref{secondstep})  and the positivity of other
dropped terms.  
Therefore the inequalities (\ref{modified}) and  (\ref{finalinequality})
are satisfied for $\alpha=2$. The latter  
implies that a pencil of curves in $H^0(V,{\cal O}(D))$ has at most 11 reduced 
fibers with components having classes only in $\Delta$.

\end{proof}

Finally we will show Corollary \ref{corolmain0}.

\begin{proof} [Proof of Corollary~\ref{corolmain0}.] Parts~\eqref{corolmain0:A} and \eqref{corolmain0:B} are
 an immediate consequence of  Theorem \ref{main}. We will show that the
 fundamental groups of the complements to a union of $r+1$ members of
 a pencil have the form described in~\eqref{corolmain0:C}.
 Consider the map $\pi: V\setminus \D \rightarrow \PP^1\setminus \R, \Card \R=r+1$ 
 corresponding to the pencil and let $\R=R\bigcup S$  where
 $R$ (resp. $S$) are the images of singular (resp. smooth)  fibers
 of $\pi$. Let $D_1 \subset \PP^1$ be a disk containing 
all critical values of $\pi$ outside of $R$ and let $D_2 \subset \PP^1$ be a disk
intersecting $D_1$ at one point and containing $\R$.  Let
$H=\pi_1(\pi^{-1}(D_1))$. The fundamental group of $\pi\vert_{\pi^{-1}(D_2)}$
is isomorphic to the extension (\ref{extension}) since over $D_2$ the map
$\pi$ is a locally trivial fibration and $\pi_2(D_2\setminus \R)=0$.
Finally $V\setminus \D$ can be retracted onto a union of the preimages of
$D_1$ and $D_2$ and van Kampen theorem gives a presentation as
the amalgamated product with $\Sigma=\pi^{-1} (D_1\cap D_2)$
i.e. the complement in a generic fiber, i.e. a closed Riemann surface, to the set of base
points of the pencil which is the extension of $\pi$ to $\tilde V$.
Since the set of isotopy classes of pencils in one of linear system
$H^0(V,{\cal O}(\delta))$ is finite and for a fixed pencil each subgroup $H$
is determined by the subset $R$ of the total set of critical points of $S$
with reducible preimages, the finiteness claim follows.
\end{proof}

\section{Pencils on $\PP^2$}\label{examples}

In remaining two  sections we will consider concrete examples of estimates of 
type of fibers of pencils on surfaces for various choices of the 
types $\Delta$ of components of reducible curves $\D\subset V$. 
In many cases the thresholds $M_1(V,\Delta), M_2(V,\Delta)$ can be made
more explicit. In this section we consider the case $V=\PP^2$.

\subsection{General estimates.} 

In the case $\Delta=[1] \subset \ZZ=\NS(\PP^2)$ one has $M_2(\PP^2,\Delta)=
3$ (cf. \cite{LY},\cite{YP},\cite{YF},\cite{Y}). Indeed, as was shown in these
references, a pencil of curves can have at most 4 fibers which are
unions of lines unless this is a pencil of lines. If
$\A$ is an arrangement of lines such that one has an essential surjection 
$\pi_1(\PP^2\setminus \A) \rightarrow F_r$, in the sense
described in assumption (**)  made in Preface, 
%that it takes each
%meridian to either to conjugate to one of the generators $g_i$ of
%$F_r$ or to a conjugate of $g_1,...,g_r$ 
then the
holomorphic map $\Phi$ described in the proof of the Theorem
\ref{main} has $r+1$ reducible fibers which union coincides with $\A$
(i.e. any line of $\A$ belongs to one of the these $r+1$ fibers of $\Phi$).
If $r>3$ then the pencil must be a pencil of lines and 
$\PP^2\setminus \A$ is fibered over $\PP^1$ with $r+1$ points removed 
and fiber isomorphic to $\CC$. Hence
$\pi_1(\PP^2\setminus \A)=F_r$.

There are pencils of curves of arbitrary large degrees $d$ containing
arrangements of lines with 3 fibers which are union of lines (for
example the curves $C_d$ given by equation
$\lambda(x^d-y^d)+\mu(y^d-z^d)=0$ 
and hence finiteness of the number of pencils of curves for which a
union of reducible fibers is a union of lines and admits surjection $\pi_1(\PP^2\setminus C_d)
\rightarrow F_r$ may take place only for $r >2$
i.e. $M_1(\PP^2,\Delta) \ge 2$. There is only one known pencil of curves with
4 fibers which are unions of lines (pencil of cubics with union of
reducible fibers being 12 lines containing 9 inflection points of a
smooth cubic).  Finitness of the number of pencils with 4 fibers being a unions of
lines is equivalent to  $M_1(\Delta)=2$.

\par Now consider the case
$\Delta_k=\{[1],...,[k]\} \subset \NS(\PP^2)$. Theorem \ref{main} yields the following:
\begin{cor} There exists a function $k \mapsto M_k=M_2(\PP^2,\Delta_k) \in \ZZ_{+}$
such that a curve $\C$, having the degrees of each of its irreducible components at most
$k$ and such that there  is surjection $\pi_1(\PP^2\setminus \C)
\rightarrow F_r$ where $r > M_k$, is a union $\C=C_1 \cup ... \cup
C_s, s \ge r+1$ of possibly reducible curves $C_i, 1 \le i \le s$
which are members of a 
pencil of curves of degree at most $k$ (i.e. $\C$  
is composed of curves of a pencil of
degree at most $k$).
\end{cor}

Corollary \ref{corolmain} shows that the pencils of curves of degree
$d$ in $\PP^2$ have independent of $d$ or $k$ bound on the number of fibers which are
unions of irreducible curves of degree at most $k$ provided $d \gg k$.
Proposition below makes it more explicit. 
%We have the following
%result which allows as described below finite exceptions for surjections
%$\pi_1(\PP^2\setminus \D) \rightarrow F_r$ more explcitely than in
%general case. 
It shows that the maximal number $\rho_{d,k}(\PP^2)$ of reducible
fibers with degrees of components at most $k$ ({\it we call such
pencils $k$-reducible}) in a pencils of curves
of degree $d \ge 2k$ is at most 11. Hence the maximal rank of a free 
quotient of the fundamental group of the complements to a union of 
members of a pencil of curves of degree greater than or equal to $2k$,
having the degrees of irreducible components not exceeding $k$, is
$\rho_{d,k}(\PP^1)-1$. Since this does not provide information about
pencils of the curves of degree between $k$ and $2k$ we have:
 $$M_k+1 \ge \rho_{d,k}(\PP^2),  \ \ \   \rho_{d,k}(\PP^2) \le 11 \ (d \ge 2k).$$ 
Moreover, as follows from Corollary \ref{corsec4}, the constant 9 in the theorem \ref{pencilsmain} can be decreased:
$M_1(\PP^2,\Delta_k) < 6$. 

\begin{prop}\label{old2.4}
Assuming~$d=nk+d_0\geq 2k, 0 \le d_0 <k$ (or equivalently $n\geq 2$), one has the following universal bounds
\begin{equation}
\label{eq:universal-bounds}
\rho_{d,k}(\PP^2) \leq 
\begin{cases} 
6 & \text{ if } k=2,\\ 
8 & \text{ if } k=3,\\
9 & \text{ if } k=4,5,\\
10 & \text{ if } 6\leq k \leq 11,\\
11 & \text{ otherwise.}
\end{cases}
\end{equation}
\par \bigskip
In particular $K(\PP^2,\Delta_k)=2$.
%2. For a fixed $k$, there is only finitely many pencils of curves of
%degree $d>k$  for which the union of reducible fibers has surjecion 
%onto $F_r, r>6$.

\end{prop}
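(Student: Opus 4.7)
The plan is to specialize the master inequality (\ref{finalinequality}) from the proof of Theorem \ref{main} to $V=\PP^2$ and then optimize over partitions of $d$ into parts of size at most $k$. On $\PP^2$ with $D=dH$ one has $e(V)=3$, $D^2=d^2$ and $KD=-3d$, so the numerator in (\ref{finalinequality}) becomes $2(3+3d^2-6d)=6(d-1)^2$. For an irreducible component $d_i$ of degree $k_i$, formula (\ref{firststep}) gives $e(d_i)\ge -Kd_i-d_i^2 = 3k_i-k_i^2$ (with equality for a smooth representative). Writing $m_j$ for the number of components of degree $j$ in a reducible fiber (so $\sum_{j=1}^k m_j\,j = d$), the denominator is at least $d^2-3d+\sum_j m_j(3j-j^2)=d^2-\sum_{j=1}^k m_j j^2$, and hence
\[
\rho_{d,k}\;\le\;\frac{6(d-1)^2}{d^2-\sum_{j=1}^k m_j\,j^2}.
\]

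Next I would identify the worst partition. A standard exchange argument, replacing two parts $j,j'$ with $j+j'\le k$ by a single part of size $j+j'$, strictly increases $\sum_j m_j j^2$ by $2jj'>0$; iterating, the maximum is achieved by the partition consisting of $n$ parts equal to $k$ and, when $d_0>0$, one part equal to $d_0$, giving $\sum_j m_j j^2 \le nk^2+d_0^2$. Combined with the identity $d^2-nk^2-d_0^2=nk(d+d_0-k)$, this yields the universal bound
\[
\rho_{d,k}\;\le\;\frac{6(d-1)^2}{nk\bigl(d+d_0-k\bigr)}.
\]

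The main remaining step, which I expect to be the principal obstacle, is to verify that for every $k\ge2$ the right-hand side of the last display is maximized over admissible $(n,d_0)$ with $n\ge 2$ and $0\le d_0<k$ precisely at $(n,d_0)=(2,0)$, where it equals $3(2k-1)^2/k^2=12-12/k+3/k^2$. Monotonicity along the line $n=2$ as $d_0$ grows reduces, after cross-multiplication, to $d_0(a+1)\le 2a(a-1)$ with $a=2k-1$, which follows from $d_0<k\le a$. For $n\ge3$ the analogous comparison in the case $d_0=0$ reduces to $2(nk-1)^2\le n(n-1)(2k-1)^2$, which is routine for $k\ge2$ (the boundary case $k=2,n=3$ being $50\le 54$); the mixed case $n\ge 3,\ d_0>0$ follows by combining these two. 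Taking the floor of $3(2k-1)^2/k^2$ for each $k$ then produces the stated values: $\lfloor 27/4\rfloor=6$ for $k=2$, $\lfloor 25/3\rfloor=8$ for $k=3$, $9$ for $k\in\{4,5\}$, $10$ for $6\le k\le 11$ (the expression crossing $11$ between $k=11$ and $k=12$), and since $3(2k-1)^2/k^2<12$ for all $k\ge1$ the universal value $11$ holds for $k\ge12$.
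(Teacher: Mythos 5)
You follow the paper's route exactly: specialize (\ref{finalinequality}) to $\PP^2$, bound the denominator via $e(d_i)\ge 3k_i-k_i^2$, identify the extremal partition $(k,\dots,k,d_0)$, and obtain $\rho_{d,k}\le 6(d-1)^2/\bigl(nk((n-1)k+2d_0)\bigr)$; you then carry out explicitly the optimization over $(n,d_0)$ that the paper dismisses as ``direct verification,'' which is a welcome clarification. The one flaw is in the exchange step: merging two parts $j,j'$ with $j+j'\le k$ need not terminate at $(k,\dots,k,d_0)$ --- for example with $k=4$ the partition $(3,3,4)$ of $10$ is merge-stable, yet $9+9+16=34<36=16+16+4$ --- so to justify $\sum_j m_j j^2\le nk^2+d_0^2$ one should instead use the transfer move (replace parts $0<j\le j'<k$ by $j-1,\,j'+1$), which strictly increases $\sum j^2$ and forces a maximizer to have at most one part below $k$. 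With that substitution the proof is complete and matches the paper's.
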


\begin{proof} In a pencil of  curves of degree $d=nk+d_0$, 
 a reducible member with degrees of irreducible components being at
 most $k$ must have at
  least $n$ components of degree $k$ and one component of degree $d_0$
  and hence in notations used in inequality (\ref{finalinequality})
  taking as $F_j$ a smooth curve of degree $d_0$ for one
  of these components and smooth curves of degree $k$ for $n$ remaning ones,
  one has $d^2-3d+\sum e(F_j) \ge d^2-3d+n(3k-k^2)+3d_0-d_0^2=(d-d_0)(d+d_0-k)$.
Therefore we obtain from (\ref{finalinequality}):
 \begin{equation}\label{estmateford>2k}
      \rho_{d,k}(\PP^2) \le 2{{e(V)+3D^2+2KD} \over {D^2+KD+\sum e(F_j)}} \le
      {{6(d-1)^2}\over {(d-d_0)(d+d_0-k)}}.
\end{equation}
The condition that a constant $\alpha$ is an upper bound of the right
hand term in (\ref{estmateford>2k})
is equivalent to positivity of the function 
\begin{equation}\label{functionh}
\array{rcl}
h(k,n,\alpha)&=&nk\big((n-1)k+2d_0\big) \alpha - 6(nk+d_0-1)^2\\
&=&(\alpha-6)k^2n^2+\big(12+\alpha (2d_0-k)\big)kn-6.
\endarray
\end{equation}
For $\alpha=12$ (and $n \ge 2$) one has:
$$h(k,n,\alpha) \ge 6 \times 4k^2+(12-12k)k\times 2-6>0$$
for $k\ge 1$. The rest of inequalities (\ref{eq:universal-bounds})
follows by direct verification.

The second part of the Proposition follows from inequality
(\ref{functionh}) as well. 

\end{proof}

\begin{corollary}\label{corsec4} For the saturated set $\Delta_k \subset \NS(\PP^2)$ one has                                                                      
$$                                                                  
2\leq M_1(\PP^2,\Delta_1)\leq 3 $$
$$                                           
2\leq M_1(\PP^2,\Delta_k)\leq M_1(\PP^2,\Delta_{k'}) < 6,
\textrm{if } k\leq k'.                                                                     
$$                                                                    
In other words, there is only a finite number of different pencils of curves with more
than six $k$-reducible fibers. Alternatively, for $r>5$ there are only finitely many curves, with
  components of degree at most $k$ and not
  composed of a pencil, 
  admitting a surjections $\pi_1(\PP^2\setminus \D) \rightarrow F_r$.
\end{corollary}

\begin{proof} Indeed for $\alpha >6$ and fixed $k$ the function 
$h(k,n,\alpha)$ in (\ref{functionh}) takes only finitely many negative values.
\end{proof} 

\begin{cor}
\label{cor:linear-bound}
The maximal number of reducible curves $\rho_{d,d-1}(\PP^2)$ in a primitive
base-component-free pencil of degree $d$ is at most $3(d-1)$ and there
exist pencil of curves of degree $d$ with $3(d-1)$ reducible fibers.
In particular $M_2(\Delta_k) \ge 3k-1$.
\end{cor}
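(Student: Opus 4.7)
The plan is to specialize the key inequality~(\ref{finalinequality}) from the proof of Theorem~\ref{main} to $V = \PP^2$ with $\Delta = \Delta_{d-1}$ and then carry out an elementary combinatorial optimization. Substituting $D = dH$, $K = -3H$ and $e(V) = 3$, the numerator becomes
\[
e(V) + 3D^2 + 2KD \;=\; 3 + 3d^2 - 6d \;=\; 3(d-1)^2.
\]
For a reducible fiber $\sum_i m_i d_i$ with $1 \le d_i \le d-1$ and $\sum_i m_i d_i = d$, the estimate $e(d_i) \ge 3 d_i - d_i^2$ coming from~(\ref{firststep}) gives
\[
D^2 + KD + \sum_i m_i e(d_i) \;\ge\; d^2 - \sum_i m_i d_i^2.
\]

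The heart of the argument is a short combinatorial maximization: over all admissible partitions $d = \sum_i m_i d_i$ with $1 \le d_i \le d-1$ and $m_i \ge 1$ an integer, one has $\sum_i m_i d_i^2 \le (d-1)^2 + 1$, with equality realised (uniquely for $d \ge 4$) by the extremal reducible divisor $C_{d-1}\cup L$ consisting of one degree-$(d-1)$ component plus one line, each with multiplicity $1$. By convexity of $x \mapsto x^2$ one should concentrate as much weight as possible on a single component of maximal allowed degree $d-1$; the remaining unit of degree is then forced to be a single line, and every other admissible partition (including non-reduced ones such as $\tfrac{d}{2}\cdot 2$) is checked by hand to yield a strictly smaller value. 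Substituting back into~(\ref{finalinequality}), the denominator is at least $2(d-1)$, and hence
\[
\rho_{d,d-1}(\PP^2) \;\le\; \frac{2 \cdot 3(d-1)^2}{2(d-1)} \;=\; 3(d-1).
\]

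For the sharpness claim and the resulting lower bound on $M(k)$, I would exhibit a pencil of degree-$d$ plane curves whose every reducible fiber is of the extremal form $C\cup L$ with $\deg C = d-1$ and $\deg L = 1$, achieving the maximal count of $3(d-1)$ such members. A natural candidate is the pencil generated by two sufficiently general products $C_0 L_0$ and $C_1 L_1$; the number of split fibers should coincide with the degree of the intersection of the pencil line inside $\PP(H^0(\PP^2,\O(d)))$ with the hypersurface parametrizing reducible plane curves of degree $d$, which an adjunction / Chern-class computation on the universal family is expected to pin down to exactly $3(d-1)$, matching the upper bound just proved. The main obstacle is to verify that this enumerative count is actually attained by a suitably chosen generic pencil, rather than being only an upper bound; once such a pencil is in hand, taking $\D$ to be the union of the $3(d-1)$ reducible fibers produces a curve with components in $\Delta_{d-1} = \Delta_k$ (for $k = d-1$) that is not composed of any pencil in $\Delta_k$ since the pencil class $[d] = [k+1]$ lies outside $\Delta_k$, and the induced rational map to $\PP^1$ yields an essential surjection onto a free group whose rank provides, via Theorem~\ref{main}, the lower bound on $M(\Delta_k)$ claimed in the corollary.
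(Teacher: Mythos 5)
Your proof of the upper bound is correct and follows essentially the same line as the paper. You specialize (\ref{finalinequality}) to $\PP^2$ (numerator $3(d-1)^2$), bound the denominator from below by $d^2-\sum m_id_i^2$ using the same genus-formula estimate the paper employs, and then observe by convexity that $\sum m_id_i^2$ is maximized at the partition $d=(d-1)+1$, giving $d^2-\sum m_id_i^2\geq 2(d-1)$ and hence $\rho_{d,d-1}\leq 3(d-1)$. This is the same computation the paper packages as the general inequality $\rho_{d,k}\leq \lfloor 6(nk+d_0-1)^2/(nk((n-1)k+2d_0))\rfloor$ from Proposition~\ref{old2.4} specialized to $n=1$, $d_0=1$; your direct maximization of $\sum m_id_i^2$ is arguably a cleaner way to reach the same bound.

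The sharpness part, however, is left open in your proposal, and you say so yourself. You propose to realize $3(d-1)$ via the degree of the locus of reducible degree-$d$ curves in $\PP(H^0(\PP^2,\mathcal{O}(d)))$ and a generic pencil, but this requires (a) computing that degree and showing it equals $3(d-1)$, and (b) verifying that a generic pencil meets that locus transversally and in distinct points (and that the split members actually decompose into pieces of degree $\leq d-1$, which is automatic here, but you do not confirm it). Neither step is carried out, so as written this is a plausibility sketch rather than a proof. The paper side-steps the issue entirely by citing Ruppert's explicit net
\[
F_{\lambda}(x_0,x_1,x_2)=\lambda_0 x_0(x_1^{d-1}-x_2^{d-1})+\lambda_1 x_1(x_2^{d-1}-x_0^{d-1})+\lambda_2 x_2(x_0^{d-1}-x_1^{d-1}),
\]
in which the locus of reducible members is cut out by $(\lambda_0^{d-1}-\lambda_1^{d-1})(\lambda_1^{d-1}-\lambda_2^{d-1})(\lambda_2^{d-1}-\lambda_0^{d-1})=0$, a product of $3(d-1)$ lines, so that a generic pencil inside the net visibly has exactly $3(d-1)$ reducible fibers, each of which is a line union a smooth degree-$(d-1)$ curve. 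I would replace your enumerative sketch with this concrete example (or complete the enumerative calculation). Finally, you should also make explicit how the lower bound on $M(k)$ drops out: with Ruppert's pencil of degree $d$ you get a curve with components in $\Delta_{d-1}$ not composed of a pencil with class in $\Delta_{d-1}$, and the induced map to $\PP^1$ minus the $3(d-1)$ images of the reducible fibers gives a surjection onto $F_{3(d-1)-1}$; you should then check carefully whether that exponent matches the $3(k+1)$ asserted in the corollary, since the count here is sensitive to the passage between the number of removed points and the free rank.
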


\begin{proof}
It is an immediate consequence of the bound (\ref{estmateford>2k}): 
$$\rho_{d,k}(\PP^2)\leq \left[ \frac{3(d-1)^2}{e_{\rel,k}}\right] 
\leq \left[ \frac{6(nk+d_0-1)^2}{nk((n-1)k+2d_0)}\right],
$$
applied to the particular case $k=d-1$, that is, $n=1$ and
$d_0=1$. The existence is a consequence of the example of a pencil due to Ruppert.
\end{proof}

\subsubsection{Ruppert's Example}
For the sake of completeness we will briefly discuss the sharpness of the linear bound given in Corollary~\ref{cor:linear-bound}.
Ruppert described in~\cite{Ruppert}  a pencil of curves of any degree $d$ with exactly $3(d-1)$
reducible fibers. Consider the net $\cN$ in $\PP^2$ given by the following curves $\cC_{\lambda}$ of degree $d$ defined by the equation:
$$F_{\lambda}(x_0,x_1,x_2)=\lambda_0 x_0(x_1^{d-1}-x_2^{d-1})+\lambda_1 x_1(x_2^{d-1}-x_0^{d-1})+\lambda_2 x_2(x_0^{d-1}-x_1^{d-1})$$
for any $\lambda=[\lambda_0:\lambda_1:\lambda_2]\in \PP^2$. One has
the following properties:
\begin{enumerate}
 \item The curves $\cC_{[1:0:0]}$, $\cC_{[0:1:0]}$, and $\cC_{[0:0:1]}$ are products of $d$ lines $x_i(x_j^{d-1}-x_k^{d-1})$, $\{i,j,k\}=\{0,1,2\}$.
 \item The generic member of $\cN$ is smooth. In order to check this note that 
 $$
 \array{c}
 \cC_{[1:0:0]}\cap \cC_{[0:1:0]}\cap \cC_{[0:0:1]}=\\
 \{P_0=[1:0:0],P_1=[0:1:0],P_2=[0:0:1],Q_{i,j}=[1:\zeta^i:\zeta^j]\},
 \endarray
 $$ 
 where $\zeta^{d-1}=1$ are the $(d-1)^2+3\leq d^2$ base points of this net. By direct calculation of the Jacobian of $F_{\lambda}$, one can 
 check that the base points are the only singular points of $\cC_\lambda$ for a finite number of values of $\lambda$ and hence by Bertini's Theorem, 
 the generic member is smooth.
 \item The curve $\cC_\lambda$ is reducible if $\lambda$ satisfies $S(\lambda)=0$ where $S(\lambda)$ is the degree $3(d-1)$ polynomial
 $$S(\lambda)=(\lambda_0^{d-1}-\lambda_1^{d-1})(\lambda_1^{d-1}-\lambda_2^{d-1})(\lambda_2^{d-1}-\lambda_0^{d-1}).$$
 In other words, the net $\cN$ intersects the discriminant variety $\cD$ in its locus of reducible curves and the intersection splits as a 
 product of $3(d-1)$ lines.
 \item A pencil in $\cN$ is given as $\cP=\{\cC_\lambda\in \cN\mid L(\lambda)=0\}$, where $L$ is a linear form. 
 If $L(\lambda)$ is in general position with respect to $S(\lambda)$, then $L(\lambda)$ defines a pencil with exactly $3(d-1)$ reducible fibers.
 \item Moreover, if $\cC_\lambda$ is a generic point of $\cN\cap \cD$, then $\cC_\lambda$ is the union of a line and a smooth curve of 
 degree~$(d-1)$.
\end{enumerate}

\subsection{Examples of pencils with a maximal number of members
  composed of quadrics}

%In light of Proposition~\ref{old2.4} one can define 
Let 
\begin{equation}
\label{eq:rk}
\rho_k(\PP^2):=\max\{\rho_{d,k}(\PP^2)\mid d\geq 2k\}. 
\end{equation}
It follows from \cite{LY},\cite{YP} that $\rho_1(\PP^2)=4$ and 
the arrangement of 12 lines containing 9 inflection points of a smooth
cubic
provides an example of a pencil with 4 fibers which are unions of lines. 
Our purpose in this section will be to study $\rho_2(\PP^2)$.

\subsubsection{The bound $\rho_{d,2}$}
By Corollary~\ref{cor:linear-bound} we know that $\rho_{3,2}(\PP^2)=6$
and 
%~\ref{estmateford>2k}
\eqref{eq:universal-bounds}, which is applicable for the remaining $d$, shows that 
$\rho_{d,2}(\PP^2)\leq 6$ for all $d\geq 4$. It is the purpose of this section to make this
into an equality by constructing a pencil of quartics with six
quartics composed of quadrics. 

Consider a pencil of conics $\Lambda$ in general position and three lines $L_1, L_2, L_3$ such that there exist three
conics $C_1, C_2, C_3\in \Lambda$ such that $L_i$ is tangent to $C_j$ and $C_k$ with $\{i,j,k\}=\{1,2,3\}$. This can be
achieved for instance with the pencil $\Lambda=\{\alpha (x^2-z^2)+\beta (y^2-z^2)\}$, the lines 
$$L_1=\sqrt{2}x+i\sqrt{2}y+\sqrt{3}z,\quad L_2=2x+iy+\sqrt{3}z,\quad L_3=\sqrt{2}x+\sqrt{2}y-3z$$
and the conics
$$C_1=x^2+2y^2-3z^2,\quad C_2=2x^2+y^2-3z^2,\quad C_3=2x^2-y^2-z^2.$$
Let $\kappa$ denote the Kummer cover of order two associated with the abelian $\ZZ_2\times \ZZ_2$-cover ramified along 
$L_1, L_2$, and $L_3$ (i.e. associated with surjection
$\pi_1(\PP^2\setminus \bigcup_1^3 L_i) \rightarrow \ZZ_2^3/\ZZ_2$
sending the meridian of $L_i$ to the $i$-th component of $\ZZ_2^3$; in 
appropriate coordinates it is $\PP^2 \rightarrow \PP^2$ given by
$[x_0,x_1,x_2] \rightarrow [x_0^2,x_1^2,x_2^2]$). Note that $\Lambda'=\kappa^*(\Lambda)$ becomes a pencil of quartics intersecting transversally at the 
16 points in the preimage of the base points of $\Lambda$ and also that $\kappa^*(C_i)$, $i=1,2,3$ is a union of 
two conics. Finally, note that $\Lambda$ contains 3 singular fibers $C'_i$, $i=1,2,3$ which are products of two lines.
Hence $\kappa^*(C'_i)$ is also a product of two conics intersecting transversally.

Additivity of Euler characteristic or the main result of
\cite{iversen} allows to relate the Euler characteristic of the surface $V$, the Euler characteristic
of the generic fiber $e_{\hat\varphi}(t_0)$ (a Riemann surface of genus $\binom{4-1}{2}=3$), and the relative Euler 
characteristic of the singular fibers $e_{\rel}$ as follows:
$$
e(X)=3+|B|=3+16=e(\PP^1)e_{\hat\varphi}(t_0)+6e_{\rel}+n=2\cdot (-4)+6\cdot 4+n,
$$
where $n$ is the relative Euler characteristic of the remaining singular fibers. Hence $n=3$, which is the number
of additional nodal quartics in the pencil~$\Lambda'$.

\begin{prop}
\label{prop:c42}
The pencil $\Lambda'$ above is a primitive base-component-free pencil
of quartics with six members being unions of quadrics. 
Therefore $M_2+1=\rho_{2}(\PP^2)=6$.
\end{prop}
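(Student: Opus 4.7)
The plan is to verify in turn that $\Lambda'$ is a pencil of quartics, base-component-free, primitive, and has exactly six 2-\cred members, and then to deduce the numerical equalities from the universal bounds established earlier. The first two properties follow from general Kummer theory: the cover $\kappa$ has degree four and satisfies $\kappa^*\O_{\PP^2}(1)=\O_{\tilde\PP^2}(2)$, so conics pull back to quartics and $\Lambda'$ is a pencil in $H^0(\tilde\PP^2,\O(4))$; moreover the four base points of $\Lambda$, lying in general position with respect to $L_1\cup L_2\cup L_3$, pull back to $16$ distinct transverse base points, so $\Lambda'$ has no common component.

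To produce the six 2-\cred members, I would argue as follows. For each tangency conic $C_i$, the restriction of $\kappa$ over $C_i\simeq\PP^1$ is a $(\ZZ_2)^2$-cover whose six expected simple branch points collapse to two double tangency points with $L_j\cup L_k$; since at each such tangency the local branch data becomes a square in the coordinate ring of $C_i$, the corresponding monodromy trivializes and $\kappa^*(C_i)$ splits as two smooth conics. For each singular member $C'_i$, a product of two lines transverse to all $L_j$, each line lifts to a smooth conic, so $\kappa^*(C'_i)$ is a transverse union of two conics. Together these produce six 2-\cred members of $\Lambda'$.

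The main step, and the one I expect to require the most care, is the Euler characteristic computation that simultaneously confirms primitivity and rules out any further 2-\cred members. A generic smooth conic $C\in\Lambda$ meets $L_1\cup L_2\cup L_3$ in six transverse points where the monodromy generates the full Galois group, so $\kappa^*(C)$ is connected; Riemann--Hurwitz then gives genus $3$, matching the generic smooth plane quartic, and hence $\Lambda'$ is primitive. Blowing up the $16$ base points and applying additivity of Euler characteristic (see~\cite{iversen}) to the induced map $\hat V\to\PP^1$, the balance $e(\hat V)=e(\tilde\PP^2)+16=2e(\Sigma)+\sum_{t}e_\rel(t)$ with $e(\Sigma)=-4$ and with each of the six identified fibers contributing $e_\rel=4$ leaves a residual contribution of exactly $3$. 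Since every singular fiber of $\Lambda'$ contributes at least $1$ to this sum and exactly $1$ only when it is an irreducible nodal quartic, the residue forces three additional irreducible nodal fibers, and $\Lambda'$ has no further 2-\cred members.

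The numerical conclusions are then immediate. Proposition~\ref{old2.4} gives $\rho_{d,2}(\PP^2)\le 6$ for $d\ge 4$, Corollary~\ref{cor:linear-bound} gives $\rho_{3,2}(\PP^2)=6$, and $\Lambda'$ realizes $\rho_{4,2}(\PP^2)=6$, so $\rho_2(\PP^2)=6$. Combining with Theorem~\ref{main} applied to $\Delta_2=\{[1],[2]\}$ then yields $M_2=\rho_2(\PP^2)=6$.
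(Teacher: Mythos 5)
Your proof follows the paper's approach exactly: it reproduces the Kummer-cover construction, the splitting of $\kappa^*(C_i)$ and $\kappa^*(C_i')$ into pairs of conics, the Euler-characteristic balance $19 = 2\cdot(-4) + 6\cdot 4 + n$ forcing $n=3$ residual nodal quartics (and hence no seventh $2$-reducible member), and the appeal to Proposition~\ref{old2.4} to conclude $M_2=\rho_2(\PP^2)=6$. You add useful detail that the paper leaves implicit (the monodromy reason for the splitting of $\kappa^*(C_i)$, and the connectedness argument for primitivity); for the splitting argument one should also note that $C_i$ meets $L_i$ transversely in two points with nontrivial $\ZZ_2$ monodromy, which is what makes the pullback split into exactly two components rather than four.
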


\section{Completely reducible fibers of pencils on surfaces in $\PP^3$}\label{surfaces}
The purpose of the remaining section is to exhibit examples of pencils on surfaces with a large number of completely
reducible divisors as well as bounds which follow from the
calculations in section  \ref{proofmaintheorem}

We shall start with the case $V=\PP^1\times \PP^1, \Delta=\{(1,0),(0,1),(1,1)\}$.
Consider as a lower bound for $\sum e(F_j)$ in~\eqref{finalinequality} the case when a completely reducible fiber 
consists of $n$ smooth curves in the class $(1, 1)$ and $m-n$ smooth curves in the class $(1, 0)$. Then the sum 
of the Euler characteristic of its irreducible components $F_j$ satisfies $2m \leq \sum e(F_j)$. Therefore
$$2 {{e(V)+3D^2+2KD} \over {D^2+KD+\sum e(F_j)}}=
2{{4+6mn-4m-4n} \over {2mn-2m-2n+\sum e(F_j)}}<2{{2+3mn-2m-2n} \over
{mn-n}} < 6$$
i.e. the number of completely reducible fibers does not exceed 6.

We will show that $M_2(\PP^1\times \PP^1,\Delta)\geq 3$,
$M_2(V,\Delta_1)=4$ for smooth cubic surfaces and
that $M_2(V,\Delta)$ can be arbitrarily large for general surfaces in
$\PP^3$ for appropriate saturated sets $\Delta$ on
respective surfaces ($\Delta_1$ is the set of classes of lines on
a cubic surface).

\subsection{Generalized Hesse arrangements on $\PP^1\times \PP^1$}
\label{sec:p1p1}
The purpose of this section is to exhibit an example of a pencil of curves on $\PP^1\times \PP^1$ with 4 completely 
reducible fibers, showing that the Hesse pencil on $\PP^2$ is not the only such
pencil on a rational surface. We shall use geometric interpretation of
the group law on cubic curve i.e. that selecting an inflection point
as the zero, a triple of points adds up to zero if and only if the triple
is collinear (cf. \cite{silver}).

\subsubsection{A Special Configuration of Points}

Consider 9 points on a smooth cubic $\cC\subset \PP^2$ satisfying Pascal's Theorem as in Figure~\ref{pascal}:

\begin{figure}[ht]
\begin{center}
\includegraphics[scale=.4]{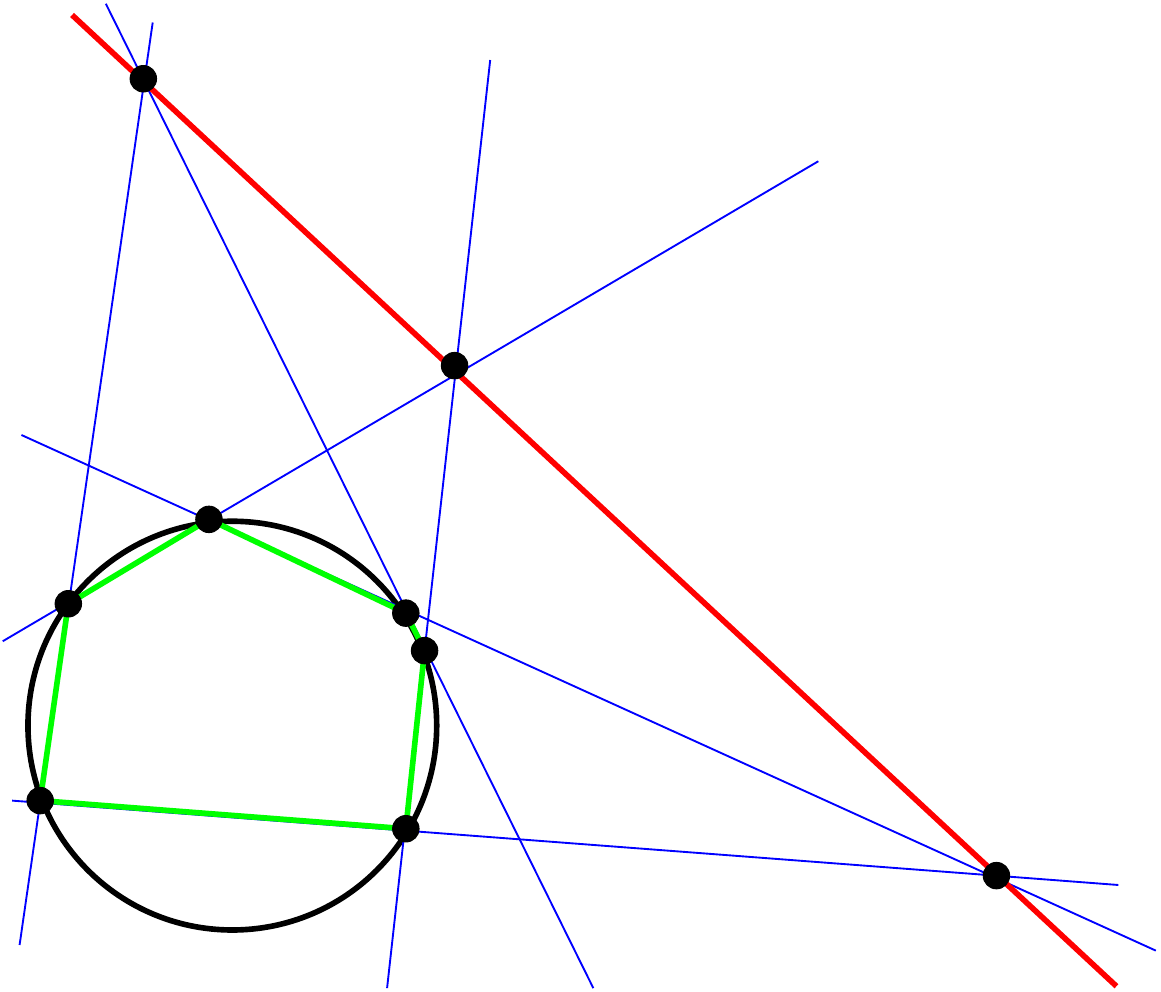}
\put(-235,25){$P_1$}
\put(-230,75){$P_2$}
\put(-190,98){$P_3$}
\put(-150,80){$P_4$}
\put(-137,57){$P_5$}
\put(-145,20){$P_6$}
\put(-190,180){$Q_1$}
\put(-135,127){$Q_2$}
\put(-32,27){$Q_3$}
\end{center}
\caption{Pascal Point Configuration}
\label{pascal}
\end{figure}
Note that such a configuration of points would have to satisfy the following relations in the 
Picard group of the cubic:
\begin{equation}
\label{eq-rels1} 
\array{ccc}
P_1+P_2+Q_1=0, & & P_4+P_5+Q_1=0, \\
P_2+P_3+Q_2=0, & & P_5+P_6+Q_2=0, \\
P_3+P_4+Q_3=0, & & P_1+P_6+Q_3=0.
\endarray
\end{equation}
In other words $P_i+P_{i+1}+Q_j=0$, $i\in \ZZ_6, j \in \ZZ_3$ and $\pi(i)=j$, where
$\pi: \ZZ_6\ \rightarrow \ZZ_3$ is reduction modulo 3.
By Pascal's Theorem
\begin{equation}
\label{eq-rels2}
\sum_i P_i =0,\quad \sum_i Q_i =0.
\end{equation}
We also ask for three additional relations involving the diagonals:
\begin{equation}
\label{eq-rels3}
\array{c}
P_1+P_4+Q_2=0,\\
P_2+P_5+Q_3=0,\\
P_3+P_6+Q_1=0.
\endarray
\end{equation}

\begin{dfn} 
Any configuration of 9 points on a smooth cubic satisfying \eqref{eq-rels1}, \eqref{eq-rels2},
and \eqref{eq-rels3} will be called a \emph{special Pascal configuration} of points.
\end{dfn}

\begin{lemma}
If a six-tuple of points $P_1,..,P_6$ forming
complete intersection of conic and a smooth cubic $\C$
satisfies (\ref{eq-rels1}), (\ref{eq-rels2}) and (\ref{eq-rels3}), then $Q_i$ are collinear
inflection points of $\C$.
Vice versa, if $Q_j,j=1,2,3$ is triple of collinear inflection points
and $P_i\ne Q_j$ is a point which order as a point of elliptic curve 
is non-equal to 2, then there exist 5 additional distinct points $P_{i'}, i'
\ne i, 1\le i' \le 6$ such that relations  (\ref{eq-rels1}), (\ref{eq-rels2}) and
(\ref{eq-rels3}) are satisfied.
\end{lemma}

\begin{proof} Let $P_i,Q_j$ be a collection of points satisfying
  (\ref{eq-rels1}), (\ref{eq-rels3}), (\ref{eq-rels3}).
By symmetry, it is enough to show $3Q_1=0$. Adding the relations in
the first row of (\ref{eq-rels1}) and the last relation in
(\ref{eq-rels3}) we obtain:
$$P_1+P_2+P_3+P_4+P_5+P_6+3Q_1=0$$ and hence 
the first relation in (\ref{eq-rels2}) yields the claim.

Vice versa, again by symmetry it is enough to show that a choice of
$P_1$ always determines remaining 5 points $P_i$ satisfying above
relations. Since $P_1$ is not a point or order 2, it follows that the
solution to the first relation in (\ref{eq-rels1}) satisfies 
$P_2\ne P_1$ and the order of $P_2$ is not equal to 2. Hence the first
5 relations (\ref{eq-rels1}) allow to determine the points
$P_2,..,P_6$. Since $Q_i$ are collinear, we have the second relation (\ref{eq-rels2})
adding 3 relations not containing common $P_{i'}$ 
among the 5 used to determine points $P_{i'}$ this gives the first
relation (\ref{eq-rels2}) and hence the last relation (\ref{eq-rels1}).
Finally adding two relations in the first row of (\ref{eq-rels1}) and
using (\ref{eq-rels2}) we obtain the last relation (\ref{eq-rels3}) and 
the remaining relations (\ref{eq-rels3}) follow. 
%Note that $0=(P_1+P_4+Q_2)+(P_3+P_6+Q_1)$ (from~\eqref{eq-rels3}) and
%$0=(P_3+P_4+Q_3)+(P_1+P_6+Q_3)$ (from~\eqref{eq-rels1}).
%Subtracting both relations one obtains
%$$
%0=(P_1+P_4+Q_2)+(P_3+P_6+Q_1)-(P_3+P_4+Q_3)-(P_1+P_6+Q_3)=Q_2+Q_1-2Q_3.
%$$
%Finally subtracting the previous equality to $0=Q_1+Q_2+Q_3$ (from~\eqref{eq-rels2}) 
%one obtains the desired relation $3Q_1=0$.
\end{proof}

\begin{prop}\label{zariskiopen}
For any smooth cubic $\cC$ there is a family of special Pascal configurations of points 
parametrized by a Zariski open subset of $\cC$.
\end{prop}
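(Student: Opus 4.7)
The plan is to translate every collinearity condition in~\eqref{eq-rels1}, \eqref{eq-rels2}, \eqref{eq-rels3} into a purely additive relation on $\cC$ by endowing $\cC$ with its elliptic group law with origin at a fixed inflection point; under this choice, three points of $\cC$ are collinear if and only if they sum to zero, so the entire problem reduces to linear algebra in the abelian group $\cC$.

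First I would take $P_1$ and $Q_1,Q_2,Q_3$ as free unknowns and solve the six equations of~\eqref{eq-rels1} recursively for $P_2,\ldots,P_6$, expressing each as an explicit $\ZZ$-linear combination of $P_1,Q_1,Q_2,Q_3$. The sixth of these equations, $P_6+P_1+Q_3=0$, will turn out to be tautological, so \eqref{eq-rels1} imposes no overdetermination. Substituting these expressions into Pascal's relation~\eqref{eq-rels2}, the $P_1$ contributions cancel and $\sum P_i$ reduces to $-(Q_1+Q_2+Q_3)$; hence~\eqref{eq-rels2} is equivalent to the single condition $Q_1+Q_2+Q_3=0$. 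Substituting similarly into the diagonal relations~\eqref{eq-rels3} and using $Q_1+Q_2+Q_3=0$, each of the three expressions simplifies to $3Q_i$ for some $i$, so~\eqref{eq-rels3} becomes exactly the requirement that each $Q_i$ be an inflection point, recovering the preceding lemma from the other side.

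The upshot is that a special Pascal configuration on $\cC$ is specified precisely by (i) a triple $(Q_1,Q_2,Q_3)$ of inflection points with $Q_1+Q_2+Q_3=0$---a finite nonempty combinatorial choice inside the $3$-torsion subgroup $\cC[3]\cong(\ZZ/3)^2$---together with (ii) a free point $P_1\in\cC$, where $P_2,\ldots,P_6$ are then determined by the formulas above. For each admissible discrete datum (i) this produces a family of configurations parametrized by $P_1\in\cC$, which is the content of the proposition. The only residual issue, and a mild one, is to check that the nine points are distinct for generic $P_1$; but every coincidence $P_i=P_j$ or $P_i=Q_k$ translates through the explicit formulas into the vanishing of some $\ZZ$-linear combination of $P_1$ with $3$-torsion points, so the coincidence locus is a finite union of translates of $\cC[3]$, and non-degeneracy holds on a Zariski-open dense subset of~$\cC$.
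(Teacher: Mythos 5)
Your proposal is correct and follows essentially the same construction as the paper: fix three collinear inflection points $Q_1,Q_2,Q_3$, choose $P_1\in\cC$ freely, and solve~\eqref{eq-rels1} successively for $P_2,\ldots,P_6$. You supply the explicit linear-algebra verification (that the sixth equation of~\eqref{eq-rels1} is tautological, that~\eqref{eq-rels2} reduces to $Q_1+Q_2+Q_3=0$, and that~\eqref{eq-rels3} reduces to $3Q_i=0$) which the paper leaves implicit, and you add the generic-distinctness check, but the underlying construction is identical.
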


\begin{proof}
It follows from the proof of previous Lemma since any choice of
inflection points $Q_i,i=1,2,3, Q_1+Q_2+Q_3=0$ and $P, {\rm ord} P\ne
2$ determines uniquely a special Pascal configuration.
\end{proof}

Generically, the three lines defined by~\eqref{eq-rels3} intersect in three double points,
the conic defined in~\eqref{eq-rels2} is smooth, and intersects the line also defined 
in~\eqref{eq-rels2} transversally.

Consider the pencil of cubics generated by 
$\cC_1:=\cL_{12}\cup \cL_{34}\cup \cL_{56}$ and $\cC_2:=\cL_{23}\cup \cL_{45}\cup \cL_{16}$,
where $\cL_{ij}$ is the line passing through $P_i$ and $P_j$.
Note that the original smooth cubic $\cC$ belongs to such a pencil and so do 
$\cC_3:=\cL_{14}\cup \cL_{25}\cup \cL_{36}$ and the union of the conic $\cQ$ passing through 
$P_1,...,P_6$ and the line $\cL$ passing through $Q_1,Q_2,Q_3$. 
In other words, one can find equations $C_i,Q,L$ of respective curves  $\cC_i,\cQ,i=1,2,3,\cL$ such that $C_3=C_1-C_2$ and $QL=C_1+C_2$.

\begin{prop}\label{afterq}
After blowing up the base points, the pencil of cubics described above induces an elliptic surface
which is generically of type $I_1+I_2+3I_3$.
\end{prop}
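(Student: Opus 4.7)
The plan is to apply additivity of the Euler characteristic on the elliptic fibration $\varphi:\tilde V \to \PP^1$ induced by the pencil after blowing up its nine base points, and then to read the fiber structure off Kodaira's classification.

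First I would identify the nine base points of the pencil. Using~\eqref{eq-rels1}, each line $\cL_{ij}$ appearing in $\cC_1$ meets the smooth cubic $\cC$ in three of the points $\{P_1,\dots,P_6,Q_1,Q_2,Q_3\}$, so $\cC\cap\cC_1$ equals this nine-point set; the analogous verifications for $\cC_2$, $\cC_3$, and $\cQ\cup\cL$ use \eqref{eq-rels1}--\eqref{eq-rels3}. Each base point has multiplicity one on a generic member, so blowing them up yields a smooth surface $\tilde V$ with $e(\tilde V)=e(\PP^2)+9=12$ and a morphism $\varphi:\tilde V\to\PP^1$ whose general fiber is a smooth elliptic curve (the pencil has no base component since $\cC$ is an irreducible smooth member).

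Next I would determine the Kodaira types of the four distinguished reducible fibers. For a generic special Pascal configuration, the three lines of each $\cC_i$ are not concurrent and their three pairwise intersection points lie off the base locus; hence the proper transform $\widetilde{\cC_i}$ is a triangle of three smooth rational curves meeting at three nodes, a fiber of type~$I_3$. Similarly $\cQ\cap\cL$ consists of two points outside $\{P_i,Q_j\}$, so $\widetilde{\cQ}\cup\widetilde{\cL}$ is a pair of smooth rational curves meeting transversally at two points, a fiber of type~$I_2$. These four fibers contribute $3\,e(I_3)+e(I_2)=11$ to the Euler characteristic. By additivity and the fact that smooth fibers contribute zero, the remaining singular fibers must account for $12-11=1$; since $e(F)\ge 2$ for every Kodaira type other than~$I_1$, there is exactly one extra fiber, an irreducible nodal cubic of type~$I_1$, yielding the advertised generic structure $I_1+I_2+3I_3$.

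The main obstacle lies precisely in the word \emph{generically}: one must verify that none of the three $\cC_i$ degenerates to three concurrent lines (type~$IV$, $e=4$), and that the extra irreducible cubic does not acquire a cusp (type~$II$, $e=2$) or degenerate into two reducible pieces. Each such accident would violate the Euler count and hence cuts out a proper closed subset of the one-parameter family of special Pascal configurations parametrized by the free choice of $P_1\in\cC$; away from this bad locus the conclusion holds. I would confirm this by computing the discriminant of the pencil in a coordinate system adapted to the three inflection points $Q_1,Q_2,Q_3$ and checking that, as $P_1$ varies, the discriminant has a simple root at the location of the predicted $I_1$ fiber for general $P_1$.
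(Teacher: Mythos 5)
Your proof is correct and follows the same route as the paper's: the paper also observes that the four distinguished reducible fibers are generically of types $3I_3+I_2$ by construction and then invokes the Euler characteristic count $e(\tilde V)=12$ to force exactly one additional $I_1$ fiber. You have simply spelled out the computation (identifying the nine base points, $e(\tilde V)=9+3=12$, $3\cdot 3 + 2 = 11$, and the observation that $I_1$ is the unique singular Kodaira type with $e=1$) and made the ``generically'' qualifier explicit, which the paper leaves implicit.
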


\begin{proof}
The existence of $I_2$ and $3I_3$ is given by hypothesis, then by a standard Euler characteristic
computation, there should be an additional fiber of type $I_1$.
\end{proof}

\begin{figure}[ht]
\begin{center}
\includegraphics[scale=.4]{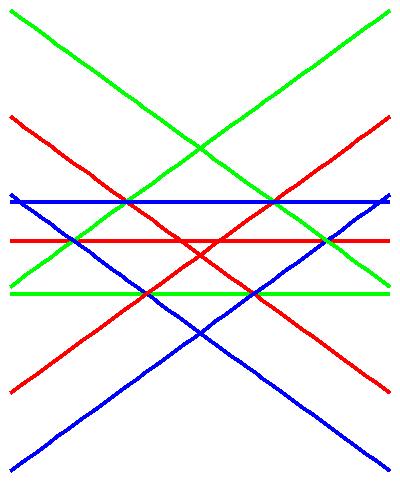}
\end{center}
\caption{Special Pascal Point Configuration of type $I_1+I_2+3I_3$}
\label{special-pascal}
\end{figure}

\begin{exam}
Equations for Figure~\ref{special-pascal} can be given as:
$$
\array{l}
C_1=(2y+(-2\sqrt{3}+2)z)(\sqrt{3}x+y+\sqrt{3}z)(-\sqrt{3}x+y+\sqrt{3}z)\\
C_2=(2y+2z)(\sqrt{3}x+y-\sqrt{3}z)(-\sqrt{3}x+y-\sqrt{3}z)\\
C_3=(\sqrt{3}x-y-(2-\sqrt{3})z)(\sqrt{3}x+y+(2-\sqrt{3})z)y\\
Q=3x^2+3y^2-3z^2+2(2-\sqrt{3})yz\\
L=z
\endarray
$$
\end{exam}

\begin{prop}
The types of the singular fibers of rational elliptic surfaces
corresponding to the pencils given by the special Pascal configurations
in Zariski open set in Proposition \ref{zariskiopen} which are not of generic type $I_1+I_2+3I_3$
are as follows:
\begin{enumerate}
 \item $I_2+2I_3+IV$
 \item $4I_3$.
\end{enumerate}
\end{prop}

\begin{proof}
By hypothesis, we know that three singular fibers of a pencil of cubic
curves forming a special Pascal configuration are unions of
lines. Hence the types of the singular fibers of the corresponding rational
elliptic surface are
$I_3$ or $IV$, and since a fourth singular fiber of the pencil of cubics
contains a line, the singular fiber of elliptic surface is of type $I_2$, $I_3$, $III$, or $IV$. 
Since the sum of Euler characteristics of the fibers is equal to $12$, there can only be three possibilities: $4I_3$, $I_2+2I_3+IV$, and $3I_3+III$. 
A surface of type $4I_3$ corresponds with the Hessian pencil, which comes from the choice of 
$P_1$ as an inflection point. A surface with configuration $I_2+2I_3+IV$ appears when the three lines in $\cC_3$ are concurrent.
Finally, the surface with configuration $3I_3+III$ does not exist according to Miranda's list of rational elliptic surfaces
(cf.\cite[p.197, item 92.]{Miranda}).
\end{proof}

% \begin{figure}[ht]
% \begin{center}
% \includegraphics[scale=.4]{special-pascal2.jpeg}
% \end{center}
% \caption{Special Pascal Point Configuration of type $I_2+2I_3+IV$}
% \label{special-pascal2}
% \end{figure}

\begin{exam}
\label{exam-pascal2}
The special Pascal configuration of type $I_2+2I_3+IV$
%Figure~\ref{special-pascal2} 
can be realized as the set of zeroes of:
$$
\array{l}
C_1=(2y-\sqrt{3}z)(\sqrt{3}x+y+\sqrt{3}z)(-\sqrt{3}x+y+\sqrt{3}z)\\
C_2=(2y+\sqrt{3}z)(\sqrt{3}x+y-\sqrt{3}z)(-\sqrt{3}x+y-\sqrt{3}z)\\
C_3=(3x-\sqrt{3}y)(3x+\sqrt{3}y)y\\
Q=x^2+y^2-z^2\\
L=z
\endarray
$$
\end{exam}

\subsubsection{Generalized Hesse Arrangements on $\PP^1\times \PP^1$}
Consider a double cover $\delta$ of $\PP^2$ ramified along a smooth
conic which is tangent to quadric ~$\cQ$ defined before Proposition
\ref{afterq} at 2 distinct points.
The rational surface which realizes this covering is a ruled surface $\PP^1\times \PP^1$.
Any irreducible component in the preimage of a line in $\PP^2$ by $\delta$ has bidegree $(1,1)$, 
$(1,0)$, or $(0,1)$ according to the relative position of the ramification locus and the line.

\begin{dfn}
We say a curve in $\PP^1\times \PP^1$ is \emph{completely reducible} if it is a union of irreducible
components all being in the set $\Delta$ consisting of 3 classes: $(1,1)$, $(1,0)$, or $(0,1)$.
\end{dfn}

\begin{thm}
There exist pencils on $\PP^1\times \PP^1$ with four completely reducible fibers.
\end{thm}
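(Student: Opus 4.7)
The plan is to construct the desired pencil by pulling back a special Pascal pencil of plane cubics via the double cover $\delta\colon \PP^1\times\PP^1\to\PP^2$. I would start with a special Pascal configuration as in the previous subsection, giving a pencil $\cP$ of cubics with four reducible members: $\cC_1,\cC_2,\cC_3$ (each a union of three lines $\cL_{ij}$) and $\cQ\cup\cL$ (the smooth conic through $P_1,\dots,P_6$ together with the line through $Q_1,Q_2,Q_3$). Since $\delta^{*}H$ has class $(1,1)$ on $\PP^1\times\PP^1$, the pencil $\cP$ pulls back to a pencil $\cP'$ of $(3,3)$-curves, and the whole task is to verify that each of its four distinguished fibers is completely reducible in the sense just defined.

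For the components that are lines this is immediate: any line $L\subset\PP^2$ has $\delta^{-1}(L)$ of class $(1,1)$, and it is either irreducible or splits as $(1,0)+(0,1)$ depending on whether $L$ is transversal or tangent to the branch conic $\cB$. Either way its components lie in $\Delta=\{(1,0),(0,1),(1,1)\}$, which handles the three fibers $\cC_i$ and the line $\cL$ appearing in the fourth reducible member.

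The hard part will be pinning down $\delta^{-1}(\cQ)$. Because $\cB$ is bitangent to $\cQ$, the intersection divisor on $\cQ$ is $\cQ\cdot\cB=2P_1+2P_2$, so the restriction of $\delta$ to $\delta^{-1}(\cQ)\to\cQ\cong\PP^1$ is a double cover branched over the even divisor $2(P_1+P_2)$, which therefore splits as a union of two smooth components $A$ and $B$ each mapping isomorphically onto $\cQ$. The total pullback has bidegree $(2,2)$, and $A,B$ meet in two nodes of $\delta^{-1}(\cQ)$ lying over $P_1,P_2$, so $A\cdot B=2$. Among the bidegree decompositions of $(2,2)$ that are compatible with the Galois involution of $\delta$, the option $(2,0)+(0,2)$ gives intersection number $4$ while $(1,1)+(1,1)$ gives intersection $2$; only the latter matches the node count, so both $A$ and $B$ must have bidegree $(1,1)$.

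Putting the pieces together, the pencil $\cP'$ has four completely reducible fibers, proving the theorem. The only genuinely delicate step is the bidegree computation for $\delta^{-1}(\cQ)$; everything else reduces to the standard facts that $\delta^{*}H=(1,1)$ and that a line pulls back to either a $(1,1)$-curve or a $(1,0)+(0,1)$ splitting according to its position relative to the branch conic.
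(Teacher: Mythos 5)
Your argument is correct and follows the paper's route exactly: pull back a special Pascal pencil of cubics through the double cover $\delta\colon \PP^1\times\PP^1\to\PP^2$ branched along a conic bitangent to $\cQ$, and check that each of the four reducible cubic fibers pulls back to a union of curves of classes $(1,1)$, $(1,0)$, $(0,1)$. The one place you go beyond the paper is in actually justifying (via the even branch divisor and the intersection/Galois-involution count) that $\delta^{-1}(\cQ)$ splits as two $(1,1)$-curves rather than $(2,0)+(0,2)$, a point the paper asserts without proof.
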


\begin{proof}
Consider a special Pascal configuration of type $I_1+I_2+3I_3$ or $I_2+2I_3+IV$ and a double cover 
$\delta$ of $\PP^2$ ramified along a smooth conic, which is bitangent to~$\cQ$. Then the pencil of cubics 
described above induces a pencil of curves of genus four on $\PP^1\times \PP^1$. The preimage of the $I_2$-fiber
becomes two $(1,1)$-curves as preimage of the conic $\cQ$ and one more $(1,1)$-curve as a preimage 
of $\cL$. The preimage of the $I_3$-fibers is a union of three $(1,1)$-curves.
\end{proof}

\begin{exam}
Consider the special Pascal configuration of type $I_2+2I_3+IV$ provided in Example~\ref{exam-pascal2}
and the covering $\delta:\PP^1\times \PP^1\to \PP^2$ given by:
$\delta([u,v],[s,t])=[2(ut+vs),us-vt,us+vt]$, which ramifies along $\{x^2+4y^2=4z^2\}$ (a conic which is bitangent
to $\cQ=\{x^2+y^2=z^2\}$). Note that:
$$
\array{rcl}
\delta^*(C_1)&=&
\left(6ut+6vs-(3+\sqrt{3})us-(3-\sqrt{3})vt\right)\\
&&\left(6ut+6vs+(3+\sqrt{3})us+(3-\sqrt{3})vt\right)\left(vt-(7-4\sqrt{3})us\right)\\
\delta^*(C_2)&=&
\left(6ut+6vs-(3-\sqrt{3})us-(3+\sqrt{3})vt\right)\\
&&\left(6ut+6vs+(3-\sqrt{3})us+(3+\sqrt{3})vt\right)\left(vt-(7+4\sqrt{3})us\right)\\
\delta^*(C_3)&=&
\left(6ut+6vs-\sqrt{3}us+\sqrt{3}vt\right)
\left(6ut+6vs+\sqrt{3}us-\sqrt{3}vt\right)\left(us-vt\right)\\
\delta^*(Q)&=&
\left(2ut-(1-\sqrt{-3})vs\right)\left(2ut-(1+\sqrt{-3})vs\right)\\
\delta^*(L)&=&us+vt
\endarray
$$
\end{exam}

\begin{cor} In notations of the Corollary \ref{corolmain} one has 
the bound $K(D=(d,1),\Delta,(\PP^1\times \PP^1)\geq K(D=(3,3),\Delta,\PP^1\times \PP^1)\geq 4$. 

Moreover, primitive base-component-free pencils 
$\varphi:\PP^1\times\PP^1\dashrightarrow \PP^1$ of bidegree $(3,3)$
and pencils confirming the equality 
$K({(3,3),\Delta},\PP^1\times\PP^1)=4$ are not unique.
\end{cor}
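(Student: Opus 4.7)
The plan is to read off both parts of the corollary from the generalized Hesse construction just completed, and to verify non-uniqueness by comparing the two types of special Pascal configurations. The lower bound $K((3,3),\Delta,\PP^1\times\PP^1)\ge 4$ is essentially a reformulation of the preceding theorem. The pencil $\Lambda'=\delta^*(\Lambda)$ pulled back along the double cover $\delta:\PP^1\times\PP^1\to\PP^2$ is a primitive base-component-free pencil of bidegree $(3,3)$ whose four singular fibres are completely reducible with components in $\Delta=\{(1,0),(0,1),(1,1)\}$: each $I_3$ triangle of lines pulls back to three $(1,1)$-curves, and the $I_2$ or $IV$ fibre, consisting of the conic $\cQ$ (bitangent to the branch conic) together with the line $\cL$, pulls back to three $(1,1)$-curves as well. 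Primitivity of $\Lambda'$ follows from primitivity of $\Lambda$ and finiteness of $\delta$; base-component-freeness follows because the base locus of $\Lambda'$ is the transverse preimage of the base points of $\Lambda$.

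For the inequality $K((d,1),\Delta,\PP^1\times\PP^1)\ge K((3,3),\Delta,\PP^1\times\PP^1)$, I would exhibit an explicit bidegree-$(d,1)$ pencil with at least four completely reducible fibres. Starting from a bidegree-$(1,1)$ sub-pencil carrying four completely reducible members of the form ``one vertical line plus one horizontal line'' through a specified incidence pattern inherited from the Pascal data, one multiplies by a fixed divisor of bidegree $(d-1,0)$ given by $d-1$ generic vertical lines. Each previously completely reducible fibre then acquires components only of classes $(1,0)$ and $(1,1)$, i.e.\ in $\Delta$; the generic choice of the $d-1$ lines ensures that the two generators have no common component, so the resulting pencil is indeed base-component-free of the full bidegree $(d,1)$.

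Non-uniqueness at the equality $K((3,3),\Delta,\PP^1\times\PP^1)=4$ follows by comparing the two elliptic fibration types $I_1+I_2+3I_3$ and $I_2+2I_3+IV$ produced from the two families of special Pascal configurations: their collections of singular fibres are not even homeomorphic, so the induced pencils on $\PP^1\times\PP^1$ cannot be equivalent under any biholomorphism of the surface, and explicit equations for each type are provided in the Examples above. The hardest part, I expect, is the second step: one has to ensure that the constructed $(d,1)$-pencil really has the prescribed bidegree (not a smaller one masked by a common factor) and that the four completely reducible members survive the extension from $(1,1)$ to $(d,1)$; this reduces to a linear-algebra check in $H^0(\PP^1\times\PP^1,\O(d,1))$ which I expect to go through by choosing the auxiliary vertical lines generically with respect to the vertical components appearing in the initial $(1,1)$-sub-pencil.
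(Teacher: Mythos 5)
Your first step (reading off $K((3,3),\Delta,\PP^1\times\PP^1)\geq 4$ from the generalized Hesse pencil $\Lambda'=\kappa^*(\Lambda)$) and your last step (non-uniqueness, deduced from the two distinct degeneration types $I_1+I_2+3I_3$ and $I_2+2I_3+IV$, which give fibrations that are not even fibrewise homeomorphic, or alternatively from the one-parameter family of special Pascal configurations) are fine and match what the paper makes available; since the paper gives no explicit proof of this corollary, those parts are an acceptable reconstruction.

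The second step, however, does not work. You propose to start from ``a bidegree-$(1,1)$ sub-pencil carrying four completely reducible members of the form one vertical line plus one horizontal line,'' and to multiply by a fixed divisor of bidegree $(d-1,0)$ consisting of $d-1$ generic vertical lines. Two things go wrong. First, no such $(1,1)$-pencil exists: the reducible $(1,1)$-curves (splittings into $(1,0)+(0,1)$) form a quadric surface in $\PP\bigl(H^0(\O_{\PP^1\times\PP^1}(1,1))\bigr)\cong\PP^3$, namely the image of the Segre embedding, so a base-component-free pencil, being a line not lying on that quadric, meets it in at most two points. A $(1,1)$-pencil with four split members would force the line onto the quadric, which corresponds exactly to a pencil with a common $(1,0)$- or $(0,1)$-factor. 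Second, and independently, multiplying both generators $F,G$ of a pencil by the same divisor $H$ of bidegree $(d-1,0)$ yields $\lambda FH+\mu GH = H(\lambda F + \mu G)$, so $H$ is by construction a common component of every member; the genericity of the $d-1$ lines cannot remove it. The resulting object is not a base-component-free pencil of bidegree $(d,1)$, so the linear-algebra check you defer to the end cannot rescue this. To get the $(d,1)$ bound one needs a genuinely new pencil construction in $H^0(\O(d,1))$ with at least four completely reducible members, not a base-extended $(1,1)$-pencil; this gap needs to be filled before the first inequality of the corollary can be claimed.
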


\subsection{Completely reducible fibers on a cubic surface}
Let $V$ be a smooth cubic surface in $\PP^3$. The subsets $\Delta_1
\subset \NS(V)$ 
consisting of the classes of  27 lines 
(generating the closure of the effective cone (cf. \cite{dolgachev}
p.485, section 9.1) is saturated. 
%Consider six points $P_1,...,P_6$ in $\PP^2$ in general position 
%and such that $S$ is biregular to the blow up of $\PP^2$ at these six points. 
%The strict preimages $\tilde L_{ij}$ of each of the 15 lines $L_{ij}$ joining $P_i$ and $P_j$, $i<j$ become lines in $S$. 
%Analogously the strict preimages $\tilde Q_{i}$ of each of the 6 conics $Q_i$ passing through $P_j$, for all $j\neq i$.
%Finally, the 6 exceptional divisors $E_i$ of the blow-ups make up for the 27 lines in~$S$.

\begin{prop} 
Let $V\subset \PP^3$ be a smooth cubic as above, then $M_2(V,\Delta_1)=4$.
\end{prop}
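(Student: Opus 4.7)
To prove $M(\Delta_1)\geq 5$ it suffices to exhibit a curve $\D\subset S$, with all irreducible components lines of $S$ (hence in $\Delta_1$), such that $\pi_1(S\setminus\D)$ admits an essential surjection onto $F_5$, and such that $\D$ is not composed of a pencil whose generic member lies in $\Delta_1$. The second condition is automatic: every line class $\ell$ on a cubic surface has $h^0(\ell)=1$ and so does not move in any pencil. By the Stein-factorization/Arapura step invoked in the proof of Theorem~\ref{main}, producing such a $\D$ is equivalent to producing a pencil on $S$ whose generic fiber has class outside $\Delta_1$ and which possesses six distinct reducible fibers whose every irreducible component is a line on $S$.

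The first candidate is the conic bundle $|H-\ell|$ attached to a line $\ell\subset S$: its generic fiber is a residual conic of class $H-\ell\notin\Delta_1$, and since through each line of a cubic pass exactly five tritangent planes, it has exactly five reducible fibers, each a pair of lines. This only gives $M(\Delta_1)\geq 4$. To obtain a sixth reducible line-supported fiber one must pass to a larger linear system. The natural candidate is a sub-pencil of the anticanonical net $|H|$ of hyperplane sections of $S$: its generic fiber is a smooth plane cubic of class $H\notin\Delta_1$, and its line-supported reducible fibers are exactly the tritangent planes of $S$ that contain the axis $L\subset\PP^3$ of the pencil. The problem therefore reduces to the following incidence statement: on a suitable smooth cubic $S$, there exists a line $L\subset\PP^3$ contained in at least six of the $45$ tritangent planes of $S$, equivalently, six of the $45$ points in the dual $\PP^3$ corresponding to these tritangent planes are collinear.

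The main obstacle is establishing this six-collinear-tritangents assertion. For a generic smooth cubic the $45$ tritangent dual points are in general linear position and no six of them are collinear, so one specializes to a cubic with extra symmetry; natural candidates are cubics with many Eckardt points such as the Fermat cubic $\sum x_i^3=0$ or the Clebsch diagonal cubic, whose automorphism groups act on the set of tritangent planes with large orbits and force collinear configurations in $(\PP^3)^\vee$. The verification amounts to writing down the tritangent-plane equations and exhibiting, by a symmetry-adapted choice of axis $L$, six of them passing through $L$. With such an $L$ in hand, take $\D$ to be the union of the (at most eighteen) lines appearing in the six tritangent fibers; the rational map $\Phi\colon S\dashrightarrow\PP^1$ defined by the pencil restricts to a surjection $S\setminus\D\to\PP^1\setminus\{t_1,\dots,t_6\}$ whose induced map on fundamental groups is essential exactly as in the proof of Theorem~\ref{main}, giving $\pi_1(S\setminus\D)\twoheadrightarrow F_5$ and hence $M(\Delta_1)\geq 5$.
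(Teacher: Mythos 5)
The paper's own proof is a one-liner: the residual conic pencil $|H-\ell|$ attached to a line $\ell\subset S$ has exactly five reducible members (the residual conics over the five tritangent planes through $\ell$), each a pair of lines in $\Delta_1$, and the paper counts this as giving $M(\Delta_1)\geq 5$. You found this same pencil but scored it as only giving $M\geq 4$. So you and the paper are using different book-keeping between the number of $\Delta$-supported fibers in a pencil and the rank of the associated free quotient: here the paper takes $M(\Delta)$ to be bounded below by the number of such fibers, while you (consistently with the paper's preface, where a pencil with $r+1$ such members yields a surjection onto $F_r$) take it to be that number minus one. Whichever convention is ``right,'' under the paper's own convention for this proposition the conic-bundle step already closes the argument, and nothing further is needed.

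Having discarded that, you propose instead to produce a pencil on $S$ with six line-supported fibers by finding a line $L\subset\PP^3$ lying in six of the $45$ tritangent planes, i.e.\ six collinear points in the dual $\PP^3$. This is where the proposal has a genuine gap: you explicitly flag this as ``the main obstacle'' and then offer only a plan (``the verification amounts to writing down the tritangent-plane equations and exhibiting, by a symmetry-adapted choice of axis $L$, six of them passing through $L$''). No such $L$ is exhibited, even for the Fermat or Clebsch cubic, and it is not at all evident that one exists: five tritangent planes through a line \emph{of} $S$ is the classical count, but six tritangent planes through an arbitrary line of $\PP^3$ is a nontrivial incidence claim requiring proof or explicit computation. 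As written, the argument is therefore incomplete; under the paper's convention it is also a detour, since the $|H-\ell|$ pencil you already identified is all that the paper uses.
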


\begin{proof} The pencil of planes in $\PP^3$ containing a fixed line
  induces a base point free pencil of residual for this line plane
  quadrics with 5 reducible fibers each consisting of 2 lines
  (cf. \cite{dolgachev}, section 9.1). On the other hand, inequality 
(\ref{finalinequality}) or Corollary \ref{keyinequality}, applied to
divisor $D=H-L$ (here $H$ is the class of a hyperplane
section and $L$ is the class of the fixed line) and using
 $D^2=0, KD=-2,e(F_j)=2, j=1,2,e(V)=9$, gives
\begin{equation}
 r+1\le 2{{e(V)+3D^2+2KD} \over {D^2+KD+\sum e(F_j)}}=5
\end{equation}
(summation over the irreducible components of a single member of the pencil).
 Therefore, $M_2(V,\Delta_1)=4$. For such $\Delta_1$, there are no pencils 
with members in a class $\delta \in \Delta_1$.
%On the other hand it follows from 
% Proposition (\ref{keyprop}) that $M_2(\Delta) <6$ (since $d_i \in \Delta_1$
% are lines, one has $\sum m_ie(d_i)=2HD$).
%For the second part, note that the pulback of a Hesse  pencils of cubics with
%the base points outside of $P_i, i=1,...,6$ induces a panicl with 4
%reducible fibers in $\Delta_2$. Note that for $D=3H$ Proposition
%\ref{keyprop} gives 7 as an upper bound (in this case the minimum of
%$e(d_i')$ is $0$).

%Choose 4 such points $P_1,...,P_4$. The linear system of proper
%preimages of quadrics through them is pulled back to a linear 
%system of degree 2 on $S$. The fibers $L_{12}+L_{34}$, $L_{13}+L_{24}$, $L_{14}+L_{23}$, $Q_{5}$, and $Q_{6}$ are 
%pulled back to fibers $\tilde L_{12}+\tilde L_{34}$, $\tilde L_{13}+\tilde L_{24}$, 
%$\tilde L_{14}+\tilde L_{23}$, $\tilde Q_{5}+E_{5}$, and $\tilde Q_{6}+E_{6}$, which are five completely
%reducible fibers of a pencil of degree two on~$S$.
\end{proof}
\smallskip
\subsection{Completely reducible fibers of pencils on surfaces of
  higher degree.}
\begin{prop}\label{negative}
For any positive integer $d$ there is a surface $S_d\subset \PP^3$ of degree $d$ and a pencil on it containing
at least $d$ completely reducible curves i.e. the curves with all
irreducible components being  lines.
\end{prop}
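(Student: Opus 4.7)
\smallskip

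\noindent\emph{Plan.} I propose to construct $S_d$ and the required pencil explicitly using the Fermat hypersurface. Take
$$S_d := \{x^d + y^d + z^d + w^d = 0\} \subset \PP^3,$$
a smooth surface of degree $d$. For $d \ge 5$ this matches the setting of the remark preceding the proposition: one has $K_{S_d} = (d-4)H$, so every line $L \subset S_d$ satisfies $L^2 = 2 - d < 0$ and $K \cdot L = d - 4 > 0$.

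\smallskip

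The pencil I will consider is the one cut out on $S_d$ by the pencil of planes $H_\lambda := \{x = \lambda y\}$ through the line $\ell = \{x = y = 0\} \subset \PP^3$, parametrized by $\lambda \in \PP^1$. The core computation is a single substitution: replacing $x$ by $\lambda y$ in the Fermat equation yields
$$(\lambda^d + 1)\, y^d + z^d + w^d = 0$$
as the defining equation of $H_\lambda \cap S_d$ inside the plane $H_\lambda$ (with affine coordinates $y, z, w$). For each of the $d$ values of $\lambda$ solving $\lambda^d = -1$, this collapses to $z^d + w^d = 0$, which factors over $\CC$ as $\prod_{k=0}^{d-1}(z - \zeta_k w)$, where $\zeta_0, \dots, \zeta_{d-1}$ are the $d$ distinct $d$-th roots of $-1$.

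\smallskip

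Each linear factor $z - \zeta_k w$, together with the equation $x = \lambda y$ of $H_\lambda$, cuts out a line in $\PP^3$; the verification that this line lies on $S_d$ is immediate from $\lambda^d = \zeta_k^d = -1$. Therefore each of the $d$ special hyperplane sections $H_\lambda \cap S_d$ decomposes as a union of $d$ distinct lines on $S_d$, giving a completely reducible member of the pencil with all components in the saturated set $\Delta$ of classes of lines on $S_d$. Since there are exactly $d$ such values of $\lambda$, the pencil contains at least $d$ completely reducible fibers, as required.

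\smallskip

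The argument consists of routine verifications: distinctness of the $d$ lines in each fiber (immediate from the $d$ distinct $d$-th roots of $-1$), and the fact that they genuinely lie on $S_d$ (direct substitution). There is no substantial obstacle. A refinement identifying these as the only completely reducible fibers would require checking that $(\lambda^d + 1)y^d + z^d + w^d$ is irreducible in $\CC[y,z,w]$ when $\lambda^d + 1 \ne 0$, a standard factorization exercise not needed for the present statement.
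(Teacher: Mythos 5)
Your construction is exactly the paper's construction specialized to $f(x,y)=x^d+y^d$ and $g(z,w)=-(z^d+w^d)$: the paper takes $S_{f,g}=\{f(x,y)=g(z,t)\}$ with $f,g$ squarefree of degree $d$ and the pencil of planes through $\{x=y=0\}$, and observes that each of the $d$ planes through a root $P_i$ of $f$ cuts out the union of the $d$ lines $L_{i,1},\dots,L_{i,d}$. Your proof is correct and takes essentially the same approach, just with the Fermat surface as a concrete instance; the aside identifying $L^2=2-d<0$, $K\cdot L=d-4>0$ for $d\geq 5$ correctly connects the construction to the remark it is meant to illustrate.
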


\begin{proof}
The following is a well-known fact about construction of surfaces
containing a large number of lines (cf. \cite{segre}).
Consider $f(x,y)$ and $g(z,t)$ two homogeneous polynomials of degree $d$ with no multiple roots, then the surface 
$$S_{f,g}=\{[x:y:z:t]\in \PP^3\mid f(x,y)=g(z,t)\}$$
contains at least $d^2$ lines, namely all the lines $L_{i,j}$, $i,j=1,...,d$ joining a point $P_i=[x_i:y_i:0:0]$ 
and a point $Q_j=[0:0:z_j:t_j]$ where $f(x_i:y_i)=g(z_j,t_j)=0$.

The pencil of hyperplanes containing the line $L=\{x=y=0\}$ induces a pencil of curves on $S_{f,g}$. Given any 
point $P_i$, the hyperplane $H_i=\{y_ix=x_iy\}$ containing $P_i$ has to contain the lines $L_{i,1},...,L_{i,d}$. 
Therefore this pencil contains at least $d$ completely reducible fibers.
\end{proof}

\bigskip

\end{document}